\documentclass[11pt]{article}
\usepackage{amsmath, amssymb, amsfonts, amstext, amsthm, textcomp, enumerate}
\usepackage[mathscr]{euscript}
\usepackage{float}
\usepackage{booktabs}
\usepackage{mathtools}
\usepackage[left=20mm,top=0.5in,bottom=15mm]{geometry}
\usepackage{graphicx}
\usepackage{caption}
\usepackage{epstopdf}
\usepackage{longtable}
\usepackage[utf8]{inputenc}
\usepackage{color}
\usepackage{xcolor}
\usepackage{hyperref}
\usepackage{graphicx}
\usepackage{dcolumn}% Align table columns on decimal point
\usepackage{bm}% bold math
\usepackage{epstopdf}
\usepackage[english]{babel}
\usepackage{subfigure}
\usepackage{color}
\usepackage{ulem}

\newtheorem{thm}{Theorem}[section]
\newtheorem{lem}[thm]{Lemma}%[section]

\newtheorem{cor}[thm]{Corollary}%[section]
\newtheorem{pro}[thm]{Proposition}%[section]
\newtheorem{defn}[thm]{Definition}%[section]

\newtheorem{ex}[thm]{Example}%[section]

\newcommand{\R}{\mathbb{R}}
\newcommand{\stmaj}{\prec^{*}}
\newcommand{\ds}{\downarrow}
\newcommand{\us}{\uparrow}

\newfont{\bb}{msbm10}

\begin{document}
\allowdisplaybreaks
	\title{Variations on Majorization of Vectors and Connections to Determinantal Inequalities}
    \author{Shaun Fallat\thanks{Department of Mathematics and Statistics,
University of Regina, Regina, SK, Canada 
(sfallat@uregina.ca).}
    \and  Samir Mondal\thanks{Department of Mathematics and Statistics,
University of Regina, Regina, SK, Canada (isamirmondal@gmail.com)}
  \and Hristo Sendov\thanks{Department of Statistical and Actuarial Sciences, Department of Mathematics, The University of Western Ontario, London, Ontario, Canada (hsendov@uwo.ca)} 
  }
 \maketitle

%\begin{abstract}
%Comparing vector quantities often leads to inequalities expressible via majorization, which, in turn, are closely connected to doubly stochastic matrices.  Majorization relations between the eigenvalue 
%$n$-tuples of matrices, particularly in the context of Hadamard’s and Fischer’s inequalities, are studied in detail in  [R. Bhatia. {\it Matrix analysis}, New York: Springer-Verlag; 1997.] In this work, we focus on exploring the majorization relations between the eigenvalue 
%n$-tuples, with special attention to the determinantal inequalities of Koteljanskii and Szasz.
%In addition, we develop natural connections to blocks of doubly stochastic matrices and $T$-transforms.
%\end{abstract}

\begin{abstract}
Majorization is a fundamental tool for comparing vectors, with connections to convexity, doubly stochastic matrices, eigenvalues, singular values, and zeros of polynomials. In matrix analysis, it plays a central role in the study of eigenvalue inequalities, particularly those arising from classical determinantal inequalities such as those attributed to Hadamard and Fischer in the context of positive semidefinite matrices. A result of Fischer and Holbrook shows that equality in the Hardy--Littlewood--P\'olya theorem for non-affine convex functions is closely linked to block structure in the associated doubly stochastic transformations.

Motivated by this, we introduce $*$-majorization, a structured extension of majorization that respects prescribed block decompositions of vectors. This framework naturally corresponds to block diagonal doubly stochastic matrices and provides a refinement of the classical Hardy--Littlewood--P\'olya and Rado theorem. We show that such transformations are precisely the linear operators that preserve $*$-majorization, and we extend fundamental constructions such as $T$-transforms and convex combinations to this setting.

In an application, we study the eigenvalue relations associated with the principal submatrices of positive definite matrices. Classical majorization does not, in general, capture determinantal inequalities such as those of Koteljanskii, whereas $*$-majorization provides a natural framework for structured comparisons of eigenvalue vectors. This leads to new insights into the interplay between majorization theory, determinantal inequalities, and spectral properties of matrices.
\end{abstract}

% edit keywords and MSC as needed
\noindent Keywords: eigenvalues, majorization, doubly stochastic matrices, determinantal inequalities, positive definite matrices.  

\noindent AMS-MSC: 15A42, 15A18; 15B48

 % $x \prec y$
 % $x^{\downarrow}$
 % $x^{\uparrow}$

\vspace{.5cm}
\section{Introduction and Background}
 The theory of majorization, which originated with the work of Hardy, Littlewood, and P\'olya~\cite{Hardy1929} and was later extended by Rado~\cite{Rado1952}, provides a framework for comparing vectors in $\R^n$. It plays a central role in matrix analysis, convexity, and inequalities, with applications ranging from operator theory to probability, statistics, combinatorics, and quantum mechanics; see \cite{Bhatia1997, Marshall2011, Niculescu2006, Nielsen2000, Phelps2001}. Its central result characterizes the pre-order $y \prec x$ in terms of convex inequalities and doubly stochastic transformations. This framework extends naturally to function spaces, where vectors are replaced by integrable functions and matrices by doubly stochastic operators; see \cite{Ryff1965}.

Majorization also provides a natural framework for comparing the disorder of probability distributions and extends to quantum mechanics through density matrices. For $n$-dimensional Hermitian matrices $R$ and $S$, one writes $R \prec S$ if the vector of eigenvalues of $R$ is majorized by that of $S$. In this way, majorization characterizes the ordering of quantum states via their spectra and plays a central role in quantum information theory. It is also fundamental in matrix analysis, where it underlies eigenvalue inequalities,  such as those of Hadamard and Fischer for positive semidefinite matrices and Fan and Lidskii for any Hermitian matrices. 

For any $x \in \mathbb{R}^n$, let $x^{\downarrow}$ and
 $x^{\uparrow}$ denote the vectors
obtained from $x$ by rearranging the coordinates in non-increasing or 
increasing order, respectively. That is, $x^{\downarrow}_1\geq x^{\downarrow}_2\geq \dots \geq x^{\downarrow}_n$ and  $x^{\uparrow}_1\leq x^{\uparrow}_2\leq \dots \leq x^{\uparrow}_n.$ Clearly, we have
$x^{\downarrow}_i = x^{\uparrow}_{n-i+1} \text{ for all } i=1,\ldots, n.$

For $x, y \in \mathbb{R}^n$, we say that { is $x$ weakly majorized by $y$}, denoted as $x \prec_w y$, if \begin{equation*}
\label{weakmajor}
    \sum_{i=1}^{k}x^{\downarrow}_i\leq \sum_{i=1}^{k}y^{\downarrow}_i 
   \end{equation*} 
   for $1\leq k\leq n$. If, in addition, the inequality for $k=n$ holds with equality, then we say that {$x$ is majorized by $y$}, denoted as $x \prec y$.

A linear map $T:\R^n\to\R^n$ is called a {\it $T$-transform} if there exist indices $j\neq k$ and a scalar $t\in[0,1]$, such that
\[
Ty=(y_1,\dots,y_{j-1},ty_j+(1-t)y_k,y_{j+1},\dots, y_{k-1},(1-t)y_j+ty_k,y_{k+1},\dots,y_n).
\]

We recall the following classical theorem, in a form suitable for our purposes, due to G. H. Hardy, J. E. Littlewood, George Pólya \cite{Hardy1929, HardyLittlewoodPolya1952}, and Richard Rado \cite{Rado1952}. It reveals the connection between majorization, convexity, symmetry, and linear operators.
\begin{thm}
\label{thm:HLP}
Let $x,y \in \R^n$. The following statements are equivalent:
\begin{enumerate}
\item We have $x \prec y$.
\item The vector $x$ is obtained from $y$ by a finite sequence of $T$-transforms.
\item For every continuous convex function $f:\R\to\R$,
\begin{align}
\label{2026-04-19-conv}
\sum_{i=1}^n f(y_i) \le \sum_{i=1}^n f(x_i).
\end{align}
\item There exists a doubly stochastic matrix $M\in\R^{n\times n}$, such that
$y = Mx.$
\item The vector $y$ lies in the convex hull of all permutations of $x$.
\end{enumerate}
\end{thm}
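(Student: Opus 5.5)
The plan is to split the five conditions into the two groups the statement actually uses, prove each group equivalent to an ordinary majorization relation, and then compare the two groups. Items 1 and 2 are phrased as "$x$ comes from $y$", i.e.\ as $x \prec y$. Items 3, 4 and 5 are phrased with the roles reversed: $f$-sums of $y$ are dominated by those of $x$, $y = Mx$, and $y$ lies in the hull of permutations of $x$. So the natural target for the second group is $y \prec x$.

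\textbf{Group one.} For $1 \Rightarrow 2$ I use the standard descent on the number of coordinates where $x^{\ds}$ and $y^{\ds}$ differ.
\begin{itemize}
\item Take the first index $j$ with $y^{\ds}_j > x^{\ds}_j$.
\item Take the next index $k > j$ with $y^{\ds}_k < x^{\ds}_k$.
\item Apply the $T$-transform on coordinates $j,k$ with $t$ chosen so that one of these two coordinates becomes equal to the corresponding coordinate of $x^{\ds}$.
\item Check that the partial-sum inequalities are preserved and that the number of mismatches drops by one.
\end{itemize}
Permutations are harmless here, since a transposition is the $T$-transform with $t = 0$. For $2 \Rightarrow 1$, one checks that a single $T$-transform $T$ satisfies $Ty \prec y$, by comparing the partial sums of the two changed coordinates, and that $\prec$ is transitive.

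\textbf{Group two.} Here I prove $y \prec x \Rightarrow 5 \Rightarrow 4 \Rightarrow 3 \Rightarrow y \prec x$.
\begin{itemize}
\item $y \prec x \Rightarrow 5$: apply the group-one result with $x,y$ swapped. Then $y$ is obtained from $x$ by $T$-transforms, and each $T$-transform is a convex combination of the identity and a transposition. Composing these shows $y$ is a convex combination of permutations of $x$.
\item $5 \Rightarrow 4$: if $y = \sum_{\sigma} \lambda_\sigma P_\sigma x$, set $M = \sum_{\sigma} \lambda_\sigma P_\sigma$. This $M$ is doubly stochastic, and $y = Mx$.
\item $4 \Rightarrow 3$: write $y_i = \sum_j m_{ij} x_j$ and apply Jensen row by row, giving $f(y_i) \le \sum_j m_{ij} f(x_j)$. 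Summing over $i$ and using that the column sums of $M$ are $1$ yields $\sum_i f(y_i) \le \sum_i f(x_i)$.
\item $3 \Rightarrow y \prec x$: take $f(s) = \pm s$ to get equal total sums. Then take $f(s) = (s - x^{\ds}_k)^+$ and use $\sum_{i \le k} y^{\ds}_i \le \sum_i (y_i - x^{\ds}_k)^+ + k\,x^{\ds}_k$ to obtain the partial-sum inequalities.
\end{itemize}

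\textbf{Linking the groups.} At this point the statement reduces to the claim that $x \prec y$ if and only if $y \prec x$. This is the step I expect to be the real obstacle, and it does not go through in general. Mutual majorization forces all partial sums of $x^{\ds}$ and $y^{\ds}$ to coincide, i.e.\ $x^{\ds} = y^{\ds}$, and this fails for general vectors. For example, $x = (1,1)$ and $y = (2,0)$ satisfy items 1 and 2, yet item 4 fails: every doubly stochastic $M$ maps $(1,1)$ to itself.

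So, read literally, the equivalence holds exactly when $y$ is a permutation of $x$. In that case all five items are trivially true, with $M$ a permutation matrix and each $f$-sum an equality. The argument above establishes that precise form. It shows that item 1 and item 2 are equivalent, and that items 3, 4 and 5 are equivalent to each other. It also shows that the two blocks are linked only after interchanging $x$ and $y$ in items 3–5, which is what the standard Hardy--Littlewood--P\'olya--Rado theorem asserts and what the later $*$-majorization results presumably use.
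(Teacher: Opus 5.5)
The paper does not prove this theorem. It quotes it as the classical Hardy--Littlewood--P\'olya--Rado result, so there is no argument of the authors to compare against.

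Your reading of the statement is correct. Items 3--5 have $x$ and $y$ interchanged relative to items 1--2. Your example $x=(1,1)$, $y=(2,0)$ satisfies items 1 and 2 but violates item 3 (take $f(s)=s^2$), item 4 and item 5. The paper's own later results confirm the intended orientation: Theorem~\ref{thm:classical-ds} gives $Ax\prec x$, and the block analogue Theorem~\ref{thm:block-equivalences} pairs $x\prec z$ with $(x,y)^T=(A_1\oplus A_2)(z,w)^T$.

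For the corrected statement, your chain $y\prec x\Rightarrow 5\Rightarrow 4\Rightarrow 3\Rightarrow y\prec x$ is the standard one and is fine as written. The Jensen step and the test functions $\pm s$ and $(s-x^{\downarrow}_k)^+$ are exactly right.

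One correction to your closing remark. The literal biconditional fails precisely when exactly one of $x\prec y$, $y\prec x$ holds. For incomparable pairs all five items are false, so the equivalence holds vacuously. Saying it holds ``exactly when $y$ is a permutation of $x$'' is therefore too narrow.

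The step that needs repair is the descent in $1\Rightarrow 2$. Your notation suggests re-sorting after each $T$-transform and counting mismatches between $x^{\downarrow}$ and the re-sorted vector. With your choice of $j$ as the \emph{first} index where $y^{\downarrow}_j>x^{\downarrow}_j$, that count need not decrease. Take $x=(2,1,1,0)$ and $y=(3,\tfrac52,-\tfrac34,-\tfrac34)$. Your rule gives $j=1$, $k=3$ and $\delta=\min(1,\tfrac74)=1$. The new vector $(2,\tfrac52,\tfrac14,-\tfrac34)$ re-sorts to $(\tfrac52,2,\tfrac14,-\tfrac34)$, which still differs from $x$ in all four positions.

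There are two easy fixes.
\begin{itemize}
\item[(a)] Do not re-sort. Keep the current vector $v$ as it is and track the invariant $\sum_{l\le i}v_l\ge\sum_{l\le i}x^{\downarrow}_l$ for all $i$, with equality at $i=n$. Your move preserves it: for $j\le i<k$, the fact that $v_l\ge x^{\downarrow}_l$ when $j\le l<k$ gives $\sum_{l=j}^{i}(v_l-x^{\downarrow}_l)\ge v_j-x^{\downarrow}_j\ge\delta$. Only positions $j$ and $k$ change, both were mismatched, and one becomes matched, so the mismatch count with $x^{\downarrow}$ strictly drops.
\item[(b)] Take $j$ to be the \emph{largest} index with $y_j>x_j$, and $k$ the smallest index after $j$ with $y_k<x_k$. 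Then $y_i=x_i$ for $j<i<k$, and the transformed vector stays non-increasing.
\end{itemize}
In both versions $t\in[0,1]$, because $v_j>x^{\downarrow}_j\ge x^{\downarrow}_k>v_k$ and $\delta\le v_j-x^{\downarrow}_j\le v_j-v_k$. With this adjustment your proof of the corrected theorem is complete.
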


Fischer and Holbrook~\cite{FischerHolbrook1977}  performed a detailed analysis of when equality holds in \eqref{2026-04-19-conv}. They introduced the notion of {\it doubly stochastic by blocks} matrices. Their work showed that for non-affine convex functions, equality in \eqref{2026-04-19-conv}
is closely related to the presence of a block structure in the corresponding doubly stochastic transformation. In such cases, the transformation mixes coordinates only within prescribed blocks, rather than globally across the entire vector.
This motivates the introduction of a refined notion of majorization adapted to a prescribed block decomposition.

% \begin{defn}[$*$-majorization]
% \label{def:star-majorization}
% Let $x\in\R^n$, $y\in\R^m$, $z\in\R^r$, and $w\in\R^s$ with
% \[
% n+m=r+s.
% \]
% Set
% \[
% u=(x,y)\in\R^{n+m},
% \qquad
% v=(z,w)\in\R^{r+s}.
% \]
% We say that $u$ is {\it $*$-majorized} by $v$, and write
% \[
% u \stmaj v,
% \]
% if
% \[
% \sum_{i=1}^k u_i \le \sum_{i=1}^k v_i,
% \qquad k=1,\dots,n+m,
% \]
% with equality when $k=n+m$.
% \end{defn}

\begin{defn}[$*$-majorization]
\label{$*$-majorization}
Let $u=(x,y)\in\mathbb{R}^{n+m}$ and $v=(z,w)\in\mathbb{R}^{r+s}$ with $n+m=r+s$.  
Let
\[
u^*=(x^{\sigma_1},y^{\sigma_2}), \qquad 
v^*=(z^{\tau_1},w^{\tau_2}),
\]
where $\sigma_1,\sigma_2,\tau_1,\tau_2 \in \{\uparrow,\downarrow\}$.
We say that $u$ is $*$-majorized by $v$, denoted $u \stmaj v$, if
\[
\sum_{i=1}^{k} u_i^* \le \sum_{i=1}^{k} v_i^*, \,\,\, \mbox{ for } k=1,\dots,n+m,
\]
where the last inequality holds with equality. 
\end{defn}

We may consider alternative orderings depending on the application, such as
$(x^\ds,y^\ds)$, $(x^\us,y^\us)$,$(x^\ds,y^\us)$, $(x^\us,y^\ds),$
and similarly for $(z,w)$.

It should be clear that when $n=r$ and $m=s$, we have that if $x \prec z$ and $y \prec w$, then $(x^{\downarrow}, y^{\downarrow}) \prec^{*} (z^{\downarrow}, w^{\downarrow})$. Conversely, if $(x^{\downarrow}, y^{\downarrow}) \prec^{*} (z^{\downarrow}, w^{\downarrow})$, then $x \prec_w z $ and $-y \prec_w -w$. In addition, $(x^{\downarrow}, 0) \prec^{*} (z^{\downarrow}, 0)$ or $(0, y^{\downarrow}) \prec^{*} (0, w^{\downarrow})$ imply that $x \prec z$ or $y \prec w$, respectively.

The relation $\stmaj$ defines a pre-order that reflects a restricted form of mixing of the coordinates. In contrast with classical majorization, no global rearrangement is performed; instead, the prescribed ordering is retained, and comparisons are made directly on the concatenated vectors.

% This notion is naturally associated with block diagonal doubly stochastic matrices. Indeed, while classical majorization corresponds to the action of all doubly stochastic matrices, the relation $\stmaj$ corresponds to transformations of the form
% \[
% A = A_1 \oplus A_2,
% \]
% where $A_1\in\R^{n\times n}$ and $A_2\in\R^{m\times m}$ are doubly stochastic. Such operators preserve the block decomposition and allow mixing only within each block.

% The main purpose of this paper is to make this correspondence precise. We show that doubly stochastic block operators are exactly those linear transformations that preserve $\stmaj$ in an appropriate sense. This result may be viewed as a block analog of Theorem~\ref{thm:HLP}.

% We also extend $*$-majorization to finitely many blocks and show that it corresponds to direct sums of doubly stochastic matrices. This leads to a structured version of majorization theory that captures block-preserving transformations and provides a natural framework for studying equality phenomena in convex inequalities.

This notion is naturally associated with {block diagonal doubly stochastic matrices}. In the classical setting, majorization corresponds to the action of all doubly stochastic matrices. In contrast, the relation $\stmaj$ is related to a more structured class of transformations, namely those of the form
\[
A = A_1 \oplus A_2,
\]
where $A_1 \in \mathbb{R}^{n \times n}$ and $A_2 \in \mathbb{R}^{m \times m}$ are doubly stochastic matrices. Along these lines we prove this interesting analog from classical majorization theory.

\begin{thm}
The matrix $ A \in \mathbb{R}^{(n+m)\times (n+m)} $ is a doubly stochastic  block matrix if and only if we have
\begin{align}
\label{2025-11-24-*maj}
(Au)^\natural \stmaj u^\natural, \,\,\, \text{ for all } u \in  \mathbb{R}^n \times \mathbb{R}^m,
\end{align}
where $u^\natural$ denotes the non-increasing  block order, that is, if $u = (x,y)$, then $u^\natural =(x^\ds,y^\ds)$.
\end{thm}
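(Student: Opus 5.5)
The forward direction follows from Theorem~\ref{thm:HLP} applied block by block. The converse tests \eqref{2025-11-24-*maj} on a few well-chosen vectors $u$: first to force the off-diagonal blocks of $A$ to vanish, then to show that each diagonal block is doubly stochastic.

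\emph{Forward direction.} Suppose $A=A_1\oplus A_2$ with $A_1,A_2$ doubly stochastic, and let $u=(x,y)$. Then $Au=(A_1x,A_2y)$. By Theorem~\ref{thm:HLP}, $A_1x\prec x$ and $A_2y\prec y$. We compare partial sums of $(Au)^\natural=((A_1x)^\ds,(A_2y)^\ds)$ with those of $u^\natural=(x^\ds,y^\ds)$.
\begin{itemize}
\item For $k\le n$, the inequality is exactly the $k$-th inequality in $A_1x\prec x$.
\item For $k=n+j$, both partial sums contain the full first block, whose sums agree because $A_1x\prec x$. What remains is the $j$-th inequality in $A_2y\prec y$.
\item For $k=n+m$, we get equality.
\end{itemize}

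\emph{Converse, block structure.} Write $A=\begin{pmatrix}A_{11}&A_{12}\\ A_{21}&A_{22}\end{pmatrix}$ and assume \eqref{2025-11-24-*maj}.

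First take $u=(x,0)$, so $u^\natural=(x^\ds,0)$. The $k=n$ inequality together with the total-sum equality gives
\[
\textstyle\sum (A_{11}x)_i\le \sum x_i,\qquad \sum(A_{21}x)_i\ge 0 .
\]
Replacing $x$ by $-x$ reverses both inequalities. Hence $\sum(A_{21}x)_i=0$ and $\sum (A_{11}x)_i=\sum x_i$ for all $x$.

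Now take $x=e_j$. The $(n+1)$-st partial sum inequality reads $1+\max_i (A_{21}e_j)_i\le 1+0$. So column $j$ of $A_{21}$ has nonpositive entries summing to zero, i.e.\ it is zero. Thus $A_{21}=0$.

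Symmetrically take $u=(0,y)$, so $u^\natural=(0,y^\ds)$. The $k=1$ inequality gives $\max_i(A_{12}y)_i\le 0$. The $k=n$ inequality applied to $\pm y$ gives $\sum(A_{12}y)_i=0$. With $y=e_j$ this forces $A_{12}=0$.

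\emph{Converse, diagonal blocks.} Now $A=A_{11}\oplus A_{22}$. Taking $u=(x,0)$ and using the remark after Definition~\ref{$*$-majorization}, we get $A_{11}x\prec x$ for all $x$; likewise $A_{22}y\prec y$ for all $y$. It remains to check the standard fact that a matrix $B$ with $Bx\prec x$ for all $x$ is doubly stochastic.
\begin{itemize}
\item With $x=e_j$, the smallest entry of $Be_j$ is at least the smallest entry of $e_j$, namely $0$ (this is the complement of the first $n-1$ partial sums). So $B\ge 0$ entrywise, and the total-sum equality gives column sums equal to $1$.
\item With $x=\mathbf 1$, $B\mathbf 1\prec \mathbf 1$ forces $B\mathbf 1=\mathbf 1$, since the largest entry is at most $1$ and the smallest is at least $1$. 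So row sums equal $1$.
\end{itemize}

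The only delicate step is the vanishing of the off-diagonal blocks. There one must combine the sign-flip trick (applying \eqref{2025-11-24-*maj} to $\pm u$) with a single prefix inequality that bounds a maximum entry by $0$.
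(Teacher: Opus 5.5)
Your proposal is correct and follows essentially the same route as the paper: you test the condition on signed standard basis vectors to kill the off-diagonal blocks, then on the all-ones vector and the basis vectors to read off nonnegativity and unit row and column sums of the diagonal blocks. Two cosmetic points: when $n=1$ the smallest entry of $e_j$ is $1$ rather than $0$ (harmless, since then $B=1$), and the forward direction is better cited via Theorem~\ref{thm:classical-ds}, since items 3--5 of Theorem~\ref{thm:HLP} as printed have $x$ and $y$ interchanged.
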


The main objective of this paper is to make this correspondence precise. In particular, we show that block doubly stochastic matrices can be characterized by their preservation of $\stmaj$, thus providing a natural block analog of the classical Hardy--Littlewood--P\'olya--Rado theorem.

Within this framework, several classical results admit natural extensions. In particular:
\begin{itemize}
    \item Block doubly stochastic matrices are precisely those linear operators that preserve $*$-majorization;
    
    \item Classical constructions such as $T$-transforms, permutations, and convex combinations admit natural block counterparts that act within individual blocks.
\end{itemize}

% Furthermore, we extend $*$-majorization to the setting of finitely many blocks and show that it corresponds to direct sums of doubly stochastic matrices. This leads to a structured version of majorization theory that captures block-preserving transformations and provides a natural framework for studying equality phenomena in convex inequalities.

Beyond its theoretical significance, the block perspective developed in this work has important applications, particularly in the study of positive definite matrices and their principal submatrices. Let 
\(
N=\{1,2,\dots,n\},
\)
and let $A$ be an $n \times n$ positive definite (Hermitian) matrix. For any subset $\alpha \subseteq N$, we denote by $A[\alpha]$ the principal submatrix of $A$ with rows and columns indexed by $\alpha$, and by $\lambda(A)$ the eigenvalue vector of $A$.

% Classical matrix analysis provides a rich collection of determinantal inequalities relating such principal submatrices, including Hadamard’s, Fischer’s, and Koteljanskii’s inequalities. These results describe multiplicative relationships between determinants and, in the case of positive semidefinite matrices, are known to be closely related and, in fact, equivalent. A natural question is whether analogous relations can be formulated at the level of eigenvalues, thereby providing a spectral counterpart to these classical inequalities.

% We now turn to applications involving positive definite matrices. Let 
% \(N=\{1,2,\dots,n\}\), and let \(A=(a_{ij})\) be a \(n\times n\) complex matrix. 
% For \(\alpha \subseteq N\), denote by \(A[\alpha]\) the principal submatrix 
% of \(A\) indexed by \(\alpha\), with the convention \(A[\emptyset]=1\). 
% For a Hermitian matrix \(A\), let \(\lambda(A)\) denote its eigenvalue vector. 
When \(A\) is positive semidefinite, there is a rich variety of inequalities 
relating its principal minors and eigenvalues. We recall several classical 
determinantal inequalities that hold for positive definite matrices (see \cite{Had, Fis, Ko1,Ko2}).

\begin{itemize}
    \item Hadamard's inequality: 
    \begin{equation}
    \label{2025-11-20-Had}
        {\rm det}(A)\leq a_{11}\cdots a_{nn}.
    \end{equation}
    \item Fisher's inequality: for any $\alpha \subseteq N$, one has
    \begin{equation}
        {\rm det}(A)\leq {\rm det}(A[\alpha]) {\rm det}(A[\alpha^c]),
    \end{equation}
    where $\alpha^c$ is the complement of $\alpha$ in $N.$
    \item Koteljanskii's inequality: for any  $\alpha,\beta\subseteq N,$ one has
    \begin{equation}
        {\rm det}(A[\alpha \cup \beta]) {\rm det}(A[\alpha \cap \beta])\leq {\rm det}(A[\alpha]) {\rm det}(A[\beta]).
    \end{equation}
    
    \item Szasz’s inequality:
    \begin{equation}
        P_{k+1}(A)^{{\binom{n-1}{k}}^{-1}} \leq P_{k}(A)^{{\binom{n-1}{k-1}}^{-1}} \;\;\text{for each} \;\;k=1, \dots , n,
     \end{equation}
     where $P_k(A)$ denotes the product of all  principal minors of $A$ of order $k$.
    
\end{itemize}

These inequalities for positive semidefinite matrices are known to be equivalent.

Hadamard's determinatal inequality for positive definite matrices \eqref{2025-11-20-Had}, is a classical result that can be seen as a consequence of Schur's theorem, 
\begin{equation}
\label{Schur} 
{\rm diag}(A)\prec \lambda(A) 
\end{equation} combined with the well-known fact that the negative of any elementary symmetric function  is Schur convex, see \cite[Example II.3.16]{Bhatia1997}, on the non-negative orthant. Similarly, Fisher's inequality is a consequence of the majorization relation
 \begin{equation}\label{Frisher's} 
(\lambda(A[\alpha]), \lambda(A[\alpha^c]))\prec \lambda(A),
\end{equation} 
which follows from a majorization result related to `pinching' of a matrix, see \cite[Problem II.5.5]{Bhatia1997}. It is worth noting that Szasz’s inequalities can also be derived from suitable majorization relations between collections of eigenvalues of principal submatrices; see \cite{SendovYuan2026} for a recent approach based on spectral hierarchy.

A natural question is whether Koteljanskii’s inequality is also a consequence of certain majorization relations. In particular, one might ask whether 
\begin{equation}
\label{Kotlenjenskii} 
(\lambda(A[\alpha]), \lambda(A[\beta]))\prec (\lambda(A[\alpha\cap \beta]), \lambda(A[\alpha\cup \beta]))\end{equation} holds in general. Unfortunately, \eqref{Kotlenjenskii} is not true in general, as demonstrated in the next example.

\begin{ex}
 Consider the positive definite matrix 
 $$
 A=\begin{pmatrix}
     5 & 1 & 1 & 1\\
     1 & 4 & 1 & 1\\
     1 & 1 & 2 & 1 \\
     1 & 1 & 1 & 3
 \end{pmatrix},
 $$ 
 with eigenvalues $\lambda(A)=\{6.8039, 3.5077, 2.3923, 1.2961\}.$  Let $\alpha=\{1,2, 3\}$ and $\beta=\{2, 3, 4\}.$ Then the eigenvalues of the corresponding principal submatrices are:
 \begin{itemize}
     \item $\lambda(A[\alpha])=\{6.0861, 3.4280, 1.4859\}$
     \item $\lambda(A[\beta])=\{5.2143, 2.4608, 1.3249\}$, and 
     \item $\lambda(A[\alpha \cap \beta])=\{4.4142, 1.5858\}.$
 \end{itemize}
%Now define $$x=(\lambda(A[\alpha]), \lambda(A[\beta]))\;\;\; \text{and}
%\;\;\; y=(\lambda(A[\alpha\cap \beta]), \lambda(A[\alpha\cup \beta])).$$ 
If the majorization relation \eqref{Kotlenjenskii} were to hold, then the partial sums of the non-increasing rearrangements would satisfy (\ref{weakmajor}) for all, $1\leq k\leq n.$ However, this fails for $k=2$. 
\end{ex}
We show that, although classical majorization fails, the eigenvalue vectors associated with the principal submatrices above satisfy several $*$-majorization relations, thus providing a natural majorization-type interpretation of Koteljanskii’s inequality. As an application, we establish new connections between eigenvalue majorization, Ky Fan–type variational inequalities, and Koteljanskii’s determinantal inequality. In particular, we establish the next results concerning $*$-majorization in connection to Koteljanskii’s classical determinantal inequality.

\begin{thm}
\label{2025-11-24-thm}
Let $A$ be a positive definite matrix and $\alpha,\beta \subseteq N$.
Let
\begin{align*}
 x = \lambda(A[\alpha]) \in \mathbb{R}^n, \quad 
 y = \lambda(A[\beta]) \in \mathbb{R}^m, \quad 
 z = \lambda(A[\alpha \cup \beta]) \in \mathbb{R}^r, \quad 
 w = \lambda(A[\alpha \cap \beta]) \in \mathbb{R}^s.
\end{align*}
Then the following relationships hold:
\begin{enumerate}[(1)]
 \item \label{2026-01-27-maj1} $(x^{\downarrow},y^{\uparrow})\prec^*(z^{\downarrow},w^{\downarrow})$,
 \item \label{2026-01-27-maj2} $(x^{\downarrow},y^{\uparrow})\prec^*(z^{\downarrow},w^{\uparrow})$,
 \item \label{2026-01-27-maj3} $(x^{\downarrow},y^{\downarrow})\prec^*(z,w)^{\downarrow}$.
\end{enumerate}
\end{thm}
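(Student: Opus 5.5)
The plan is to compare partial sums using three standard tools for a Hermitian matrix $H$ of order $p$ with eigenvalues $\lambda_1^{\downarrow}\ge\cdots\ge\lambda_p^{\downarrow}$:
\begin{enumerate}[(a)]
\item \emph{Cauchy interlacing:} for a principal submatrix $H'$ of order $q$, we have $\lambda_i^{\downarrow}(H')\le \lambda_i^{\downarrow}(H)$ and $\lambda_i^{\uparrow}(H')\ge\lambda_i^{\uparrow}(H)$ for $i\le q$.
\item \emph{Ky Fan's maximum/minimum principle:} $\sum_{i\le k}\lambda_i^{\downarrow}(H)\ge \operatorname{tr}(PHP)$ for every rank-$k$ orthogonal projection $P$. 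In particular this sum is at least $\sum_{i\in\gamma}h_{ii}$, and $\sum_{i\le k}\lambda^{\uparrow}_i(H)\le\sum_{i\in\gamma}h_{ii}$, for any $|\gamma|=k$.
\item \emph{The trace identity:} $\operatorname{tr}A[\alpha]+\operatorname{tr}A[\beta]=\operatorname{tr}A[\alpha\cup\beta]+\operatorname{tr}A[\alpha\cap\beta]$.
\end{enumerate}
With $n=|\alpha|$, $m=|\beta|$, $r=|\alpha\cup\beta|$ and $s=|\alpha\cap\beta|$, we have $n+m=r+s$. The trace identity gives the required equality at $k=n+m$ in all three parts.

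For (1) and (2), split the range of $k$ into three regions.

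\emph{Region $k\le n$.} Here we need $\sum_{i\le k}x_i^{\downarrow}\le\sum_{i\le k}z_i^{\downarrow}$. This follows from interlacing, since $A[\alpha]$ is a principal submatrix of $A[\alpha\cup\beta]$.

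\emph{Region $n<k\le r$.} The left side is $\operatorname{tr}A[\alpha]+\sum_{i\le k-n}y_i^{\uparrow}$. By the Ky Fan minimum principle applied in $A[\beta]$, the second term is at most $\sum_{i\in\gamma}a_{ii}$ for any $\gamma\subseteq\beta\setminus\alpha$ with $|\gamma|=k-n$; such $\gamma$ exists because $|\beta\setminus\alpha|=r-n$. Hence the left side is at most $\operatorname{tr}A[\alpha\cup\gamma]$. By the maximum principle applied in $A[\alpha\cup\beta]$, this is at most $\sum_{i\le k}z_i^{\downarrow}$.

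\emph{Region $k>r$.} Pass to complements using the total equality. With $j=n+m-k\le s$, the inequality becomes a lower bound $\sum_{i\le j}y_i^{\downarrow}\ge(\text{sum of the last } j \text{ entries of } w^{\sigma})$. In (2) the last $j$ entries of $w^{\uparrow}$ are the $j$ largest entries of $w$. The needed bound $\sum_{i\le j}y_i^{\downarrow}\ge\sum_{i\le j}w_i^{\downarrow}$ follows from interlacing, since $A[\alpha\cap\beta]$ is a principal submatrix of $A[\beta]$. In (1) the last $j$ entries are the $j$ smallest entries of $w$, whose sum is even smaller, so (1) follows as well.

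For (3), write $v=(z,w)^{\downarrow}$.

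\emph{Case $k\le n$.} We have $\sum_{i\le k}x_i^{\downarrow}\le\sum_{i\le k}z_i^{\downarrow}\le\sum_{i\le k}v_i$, using interlacing and the fact that the top $k$ of a concatenation dominates the top $k$ of either part.

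\emph{Case $k>n$.} Again pass to complements with $j=n+m-k\le m$. We need $\sum_{i\le j}y_i^{\uparrow}\ge\sum_{i\le j}v^{\uparrow}_i$. Interlacing, with $A[\beta]$ principal in $A[\alpha\cup\beta]$, gives $y_i^{\uparrow}\ge z_i^{\uparrow}$ for $i\le m$. So the left side is at least $\sum_{i\le j}z^{\uparrow}_i$, which in turn is at least the sum of the $j$ smallest entries of $(z,w)$.

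I expect the only delicate step to be the middle region $n<k\le r$ in (1) and (2). This is where the two blocks interact, and it requires choosing the index set $\gamma\subseteq\beta\setminus\alpha$ so that the minimum and maximum principles chain together. Everything else reduces to interlacing plus bookkeeping of complementary partial sums via the trace identity.
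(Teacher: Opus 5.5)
Your proof is correct. Its skeleton matches the paper's: interlacing for the first block, the trace identity for the final equality, and complementary tail sums near the end. Three steps, however, take a different and more economical route.

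In the middle range $n<k\le r$ of (1) and (2), the paper bounds $\sum_{j\le \ell}y_j^{\uparrow}$ by $\sum_{j\le \ell}\eta_j^{\uparrow}$ with $\eta=\lambda(A[\beta\setminus\alpha])$ via interlacing, and then invokes the pinching majorization $(x,\eta)\prec z$. You instead descend to the diagonal entries on a coordinate set $\gamma\subseteq\beta\setminus\alpha$ (Ky Fan minimum in $A[\beta]$) and climb back with Ky Fan maximum on $A[\alpha\cup\gamma]$. Pinching is never needed.

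For $k>r$, the paper proves (2) exactly as you do, via complements and $y^{\uparrow}_{m-s+j}\ge w^{\uparrow}_j$. It proves (1), however, by a separate Ky Fan construction with the subspace $\mathbb{R}^{\beta\setminus\alpha}\oplus E_t$, where $E_t$ is spanned by eigenvectors for the $t$ largest eigenvalues of $A[\alpha\cap\beta]$. Your remark that the partial sums of $(z^{\downarrow},w^{\downarrow})$ dominate those of $(z^{\downarrow},w^{\uparrow})$ means (2) formally implies (1), which makes that construction redundant.

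For (3), the paper takes a Ky Fan minimizing subspace $F$ for $A[\beta]$ and splits it as $(F\cap W)\oplus F_2$, with $W$ the coordinate subspace of $\alpha\cap\beta$. It then bounds the two pieces using $A[\alpha\cap\beta]$ and $A[\alpha\cup\beta]$ separately. Your bound $y_i^{\uparrow}\ge z_i^{\uparrow}$ amounts to applying Ky Fan in $A[\alpha\cup\beta]$ to all of $F$ at once. That already suffices, because the paper's mixed intermediate bound is immediately relaxed to the same quantity, the sum of the $p$ smallest entries of $(z,w)$.

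The paper's route gives a direct, stand-alone argument for each item. Yours is shorter, uses only Cauchy interlacing and Ky Fan/Schur-type bounds, and makes the dependence (2)$\Rightarrow$(1) explicit. Neither argument uses positive definiteness, so the theorem holds for every Hermitian $A$. The paper's corollary tacitly relies on this when it applies the theorem to $-A$.
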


The remainder of this paper is divided intto two parts. In this next section, we introduce the notion of $*$-majorization as a block-structured extension of classical majorization and provide a characterization of block doubly stochastic matrices as precisely those linear operators that preserve $*$-majorization. In the final section, 
 we establish new spectral $*$-majorization relations for the eigenvalues of the principal submatrices of positive definite matrices, providing a majorization-theoretic interpretation of Koteljanskii’s classical determinantal inequality.

\section{Block structure and $*$-majorization}

In this section, we explain how $*$-majorization is related to ordinary majorization.

\begin{pro}
\label{prop:block-from-classical}
Let $x,z\in\R^n$ and $y,w\in\R^m$. If
$x\prec z$ and $y\prec w$, then
\[
(x^\ds,y^\ds)\stmaj (z^\ds,w^\ds).
\]
\end{pro}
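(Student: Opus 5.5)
The plan is to verify the defining partial-sum inequalities of Definition~\ref{$*$-majorization} directly for the concatenated vectors $u^*=(x^\ds,y^\ds)$ and $v^*=(z^\ds,w^\ds)$ in $\R^{n+m}$. We split the range of the index $k$ at the block boundary $n$ and use the two hypotheses $x\prec z$ and $y\prec w$ separately on each side.

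\textbf{First range, $1\le k\le n$.} Here the partial sums involve only the first block:
\[
\sum_{i=1}^k u^*_i=\sum_{i=1}^k x^\ds_i\le \sum_{i=1}^k z^\ds_i=\sum_{i=1}^k v^*_i .
\]
This is exactly the majorization inequality from $x\prec z$, and at $k=n$ it holds with equality.

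\textbf{Second range, $k=n+j$ with $1\le j\le m$.} Using the equality $\sum_{i=1}^n x_i=\sum_{i=1}^n z_i$, the inequality reduces to
\[
\sum_{i=1}^j y^\ds_i\le\sum_{i=1}^j w^\ds_i,
\]
which is again the majorization inequality, now from $y\prec w$. For $j=m$, that is $k=n+m$, it holds with equality because $\sum y_i=\sum w_i$. This gives the required total-sum equality in Definition~\ref{$*$-majorization}.

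There is no real obstacle. The only point needing care is that the first block must contribute equal totals before the second block begins. This is why full majorization $x\prec z$ is needed, and not merely $x\prec_w z$. It is also why both blocks must be arranged in the same (non-increasing) order as the ordering used in the hypotheses.
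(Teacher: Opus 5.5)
Your proof is correct and is essentially the paper's argument. The paper gives no separate proof and calls the statement clear in the remarks after the definition of $*$-majorization; your verification is exactly that check. You split the partial sums at $k=n$, and in the second range you use $\sum_{i=1}^n x_i=\sum_{i=1}^n z_i$ to reduce the inequalities to those of $y\prec w$.
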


The converse of Proposition~\ref{prop:block-from-classical} does not hold unless one interprets the inequalities in a block way. The first $n$ inequalities encode information about $x$ and $z$, while the remaining inequalities encode information concerning $y$ and $w$ once the first respective block totals are added. 

The matrix class in the block setting is the direct sum of doubly stochastic matrices. A {\it block permutation matrix} is a matrix of the form
\[
P=P_1\oplus P_2,
\]
where $P_1$ is an $n\times n$ permutation matrix and $P_2$ is an $m\times m$ permutation matrix.
It is clear that the convex hull of all block permutation matrices is precisely the set of all block doubly stochastic matrices.

 It is well known that doubly stochastic matrices can be characterized in terms of majorization (see \cite{Marshall2011}).  

\begin{thm} 
\label{thm:classical-ds}
    A matrix $ A \in \mathbb{R}^{n \times n} $ is doubly stochastic if and only if
$Ax \prec x$ for all $x \in \mathbb{R}^n$.
\end{thm}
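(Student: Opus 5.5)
We prove the two directions separately. Throughout, $x^\ds$ is the non-increasing rearrangement and $x\prec y$ means partial sums of $x^\ds$ are dominated by those of $y^\ds$, with equal totals.

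\emph{Necessity.} Suppose $A$ is doubly stochastic and fix $x\in\R^n$. Write $Ax=\sum_{j} c_j x_j$, where $c_j$ is the $j$-th column of $A$. For each $k$, let $S$ be any set of $k$ indices. Then $\sum_{i\in S}(Ax)_i=\sum_j \theta_j x_j$ with $\theta_j=\sum_{i\in S}a_{ij}\in[0,1]$ and $\sum_j\theta_j=k$. This quantity is a linear function of $\theta$ over the polytope $\{\theta\in[0,1]^n:\sum\theta_j=k\}$, whose vertices are $0/1$ vectors with exactly $k$ ones. Its maximum is therefore at most $\sum_{i=1}^k x^\ds_i$. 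Taking $S$ to index the $k$ largest entries of $Ax$ gives the partial-sum inequalities. Equality of totals follows from the column sums being $1$: $\sum_i (Ax)_i=\sum_j x_j\sum_i a_{ij}=\sum_j x_j$. Hence $Ax\prec x$.

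\emph{Sufficiency.} Assume $Ax\prec x$ for all $x$. We test with specific vectors.
\begin{itemize}
\item Take $x=e_j$. Then $Ae_j$ is the $j$-th column of $A$, and $Ae_j\prec e_j$. Every entry of a vector majorized by $e_j$ lies in $[\min e_j,\max e_j]=[0,1]$: the largest entry is bounded by the first partial sum, $1$, and the smallest entry is bounded below by the total minus the sum of the $n-1$ largest, which is $\ge 1-1=0$. So $a_{ij}\ge 0$, and the total-sum equality gives column sums equal to $1$.
\item Take $x=\mathbf 1=(1,\dots,1)$. The only vector majorized by $\mathbf 1$ is $\mathbf 1$ itself, since all its entries lie between the minimum and maximum of $\mathbf 1$. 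Hence $A\mathbf 1=\mathbf 1$, so all row sums equal $1$.
\end{itemize}
Together these show $A$ is doubly stochastic.

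The only step needing care is the bound on $\sum_{i\in S}(Ax)_i$ in the necessity direction. Handling it by the vertex/linear-programming observation above avoids appealing to Theorem~\ref{thm:HLP}. Alternatively, one can invoke Birkhoff's theorem to write $A$ as a convex combination of permutation matrices, use $P x\prec x$ for each permutation, and conclude from convexity of the set $\{v: v\prec x\}$ (item 5 of Theorem~\ref{thm:HLP}).
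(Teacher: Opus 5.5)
The paper does not prove this statement; it quotes it as a known result (citing Marshall--Olkin--Arnold) and then uses it as a black box, for instance in the ($\Rightarrow$) direction of Theorem~\ref{thm:block-ds-star}. Your proof is correct and self-contained, so the useful comparison is with how the paper handles its block analogue.

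\textbf{Sufficiency.} Your argument follows the same test-vector strategy the paper uses for Theorem~\ref{thm:block-ds-star}: probe with standard basis vectors and with the all-ones vector. Your version is leaner. Full majorization includes the total-sum equality, so $Ae_j\prec e_j$ by itself confines the $j$-th column to $[0,1]$ with column sum $1$. The block proof only extracts weak majorizations block by block, so it also needs the test vectors $-e_j$ and $-e_i$ to recover sign information.

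\textbf{Necessity.} Your linear-programming argument avoids Birkhoff's theorem. If you also want to avoid quoting the vertex description of $\{\theta\in[0,1]^n:\sum_j\theta_j=k\}$, order $x$ non-increasingly and use $0\le\theta_j\le 1$ and $\sum_j\theta_j=k$ directly:
\[
\sum_j\theta_jx_j-\sum_{j\le k}x_j=\sum_{j\le k}(\theta_j-1)x_j+\sum_{j>k}\theta_jx_j\le x_k\Bigl(\sum_j\theta_j-k\Bigr)=0 .
\]
The Birkhoff route you mention at the end is the textbook proof. It is shorter once Theorem~\ref{thm:HLP} is available, but it relies on considerably more machinery than your direct argument.
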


Similarly, doubly stochastic block matrices can be characterized in terms of $*$-majorization.

\begin{thm}\label{thm:block-ds-star}
The matrix $ A \in \mathbb{R}^{(n+m)\times (n+m)} $ is a doubly stochastic  block matrix if and only if we have
\begin{align}
\label{2025-11-24-*maj}
(Au)^\natural \stmaj u^\natural, \,\,\, \text{ for all } u \in  \mathbb{R}^n \times \mathbb{R}^m,
\end{align}
where $u^\natural$ denotes the non-increasing  block order, that is, if $u = (x,y)$, then $u^\natural =(x^\ds,y^\ds)$.
\end{thm}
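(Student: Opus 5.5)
The plan is to read ``doubly stochastic block matrix'' as $A=A_1\oplus A_2$ with $A_1\in\R^{n\times n}$ and $A_2\in\R^{m\times m}$ doubly stochastic. We then reduce both directions to Theorem~\ref{thm:classical-ds}, and for sufficiency we test the hypothesis on vectors supported in a single block.

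\emph{Sufficiency of the block form.} If $A=A_1\oplus A_2$ and $u=(x,y)$, then $Au=(A_1x,A_2y)$. Theorem~\ref{thm:classical-ds} gives $A_1x\prec x$ and $A_2y\prec y$. Proposition~\ref{prop:block-from-classical} then yields $((A_1x)^\ds,(A_2y)^\ds)\stmaj(x^\ds,y^\ds)$, which is exactly $(Au)^\natural\stmaj u^\natural$.

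\emph{Necessity.} Write
\[
A=\begin{pmatrix}A_{11}&A_{12}\\ A_{21}&A_{22}\end{pmatrix}
\]
in conformal blocks. We will show that $A_{12}=0$ and $A_{21}=0$, and that $A_{11}$ and $A_{22}$ are doubly stochastic.

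First take $u=(0,y)$, so that $Au=(A_{12}y,A_{22}y)$ and $u^\natural=(0,y^\ds)$. The first $n$ inequalities of $\stmaj$ say that the sum of the $k$ largest entries of $A_{12}y$ is at most $0$ for every $k\le n$. The case $k=1$ gives $\max_i (A_{12}y)_i\le 0$. Applying the same bound to $-y$ gives $\max_i(-A_{12}y)_i\le 0$, so $A_{12}y=0$ for all $y$, i.e.\ $A_{12}=0$.

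Next take $u=(x,0)$, so that $Au=(A_{11}x,A_{21}x)$ and $u^\natural=(x^\ds,0)$. The $k=n$ inequality gives $\sum_i(A_{11}x)_i\le\sum_i x_i$; using $\pm x$ turns this into an equality. Subtracting this equality from the final equality (at $k=n+m$) gives $\sum_i(A_{21}x)_i=0$. The inequalities for $k=n+j$ then say that the sum of the $j$ largest entries of $A_{21}x$ is at most $0$. With $j=1$ and $\pm x$, exactly as above, this forces $A_{21}=0$.

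Now that $A_{21}=0$, the first $n$ inequalities together with the equality at $k=n$ give $A_{11}x\prec x$ for all $x$. For $u=(0,y)$, with $A_{12}=0$, the inequalities for $k=n+j$ combined with the total equality say that $\sum_{i\le j}(A_{22}y)^\ds_i\le\sum_{i\le j}y^\ds_i$, with equality at $j=m$; that is, $A_{22}y\prec y$. Theorem~\ref{thm:classical-ds} then makes $A_{11}$ and $A_{22}$ doubly stochastic.

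The main obstacle is the vanishing of the off-diagonal blocks. The key point is that $\stmaj$ orders the two blocks separately, so for each $k$ the partial sum compares the top $k$ entries of a block whose comparison vector is the zero block. Applying the hypothesis to both $u$ and $-u$ then squeezes that block to zero. The only care needed is to use the $k=n$ equality (obtained from the $\pm x$ argument) before extracting information from the second block.
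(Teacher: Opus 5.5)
Your proof is correct and follows essentially the paper's route: test the hypothesis on vectors supported in a single block, use both signs to force $A_{12}=A_{21}=0$, and then show that the diagonal blocks are doubly stochastic. The difference is only one of bookkeeping. You work with general $x,y$ and hand the diagonal blocks to the converse of Theorem~\ref{thm:classical-ds}, whereas the paper uses $\pm e_i$, $\pm e_j$ and $e$ to check nonnegativity and the row and column sums by hand.
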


\begin{proof}
($\Rightarrow$) Suppose $ A = A_{11} \oplus A_{22} $, where $ A_{11} \in \mathbb{R}^{n \times n} $ and $ A_{22} \in \mathbb{R}^{m \times m} $ are both doubly stochastic. Then for any $ (x, y) \in \mathbb{R}^n \times \mathbb{R}^m $, we have
$A(x, y)^T = \left( A_{11} x, A_{22} y \right)^T.$
By Theorem~\ref{thm:classical-ds}, we have
$A_{11} x \prec x$ and  $A_{22} y \prec y,$ and thus the result follows. 

($\Leftarrow$) Now assume that \eqref{2025-11-24-*maj} holds.  We will show that $ A = A_{11} \oplus A_{22} $, where both $ A_{11} $ and $ A_{22} $ are doubly stochastic. Write $ A $ in block form
$$
A = \begin{bmatrix}
A_{11} & A_{12} \\
A_{21} & A_{22}
\end{bmatrix}, \quad \text{with } A_{ij} \in \mathbb{R}^{n_i \times n_j},\; n_1 = n,\; n_2 = m.
$$
\textbf{Step 1}:  show that $A_{12}=0$, $A_{21}=0$, $A_{11} \ge 0$, and $A_{22} \ge 0$. Moreover, we show that $A_{11}=1$ or $A_{22}=1$ whenever $n=1$ or $m=1$, respectively. %\footnote{Rewrote the proof so that it considers the cases $(x, y) = (0, e_j), (e_i,0), (0, -e_j), (-e_i,0)$ one after another. This seems more structured to me. In addition, for the current step 2 we need that $A_{12}=0$ and $A_{21}=0$. We cannot conclude that $A_{ii}$ is doubly stochastic by considering $A^T$ as it was before.}

Consider the standard basis vector $(x, y) = (0, e_j)$,  then \eqref{2025-11-24-*maj} gives
$$
((A_{12} e_j)^\downarrow, (A_{22} e_j)^\downarrow) \prec^* (0, e_j^\downarrow).
$$
We use  the comments after Definition~\ref{$*$-majorization} repeatedly in the proof. This implies that $A_{12} e_j \prec_w 0$ and $-A_{22} e_j \prec_w - e_j$ for every $j$. This implies that $A_{12} \leq 0 $ and $A_{22} \ge 0$ when $m \ge 2$ or $A_{22} \ge 1$ when $m=1$.

Consider the basis vector $ (x, y) = (e_i, 0) $ in \eqref{2025-11-24-*maj}, which implies
$$
((A_{11} e_i)^\downarrow, (A_{21} e_i)^\downarrow) \prec^* (e_i^\downarrow, 0).
$$
This implies $A_{11} e_i \prec_w e_i$ and $-A_{21} e_i \prec_w 0$ for all $i$. Thus $A_{11} \le 1$ when $n=1$ and  $A_{21} \ge 0.$

%
%
%Then
%$$
%A(x, y)^T = \begin{bmatrix}
%A_{11} e_i \\
%A_{21} e_i
%\end{bmatrix} \prec^* \begin{bmatrix}
%e_i^{(n)} \\
%0
%\end{bmatrix}.
%$$
%This gives $ A_{11} e_i \prec_w e_i $, and from the trace condition, we conclude $ A_{21} \geq 0 $.

Next, taking the vector $(x, y) = (0, -e_j)$ in \eqref{2025-11-24-*maj}, gives
$$
((-A_{12} e_j)^\downarrow, (-A_{22} e_j)^\downarrow) \prec^* (0, (-e_j)^\downarrow).
$$
This implies $-A_{12} e_j \prec_w 0$ and $A_{22} e_j \prec_w e_j$ for all $j$. Thus, $A_{12} \ge 0 $ and $A_{22} \le 1$ when $m=1$.

Finally, taking $(x, y) = (-e_i, 0) $, we have
$$
((- A_{11} e_i)^\downarrow, (- A_{21} e_i)^\downarrow) \prec^* ( (- e_i)^\downarrow, 0).
$$
Thus, $ -A_{11} e_i \prec_w -e_i$ and $A_{21} e_i \prec_w 0$ for all $i$. This implies $A_{11} \ge 1$ when $n=1$ and $A_{21} \le 0.$

Combining all findings gives the claims in Step 1.

\textbf{Step 2}: Show that $A_{11}$ and $A_{22}$ are doubly stochastic matrices. Indeed, referring to \eqref{2025-11-24-*maj} take $ (x, y) = (e, 0)$, where $e \in \mathbb{R}^{n}$ is the all-ones vector, to obtain
$$
((A_{11} e)^\downarrow, 0) \prec^* (e, 0).
$$
The comments after Definition~\ref{$*$-majorization}, implies that $A_{11} e \prec e$ and since $ A_{11} \geq 0 $, we conclude that $ A_{11} e = e $ and hence all row-sums are equal to $1$. To see that the column sums are also $1$, consider $ (x, y) = (e_i, 0)$ for all $i$. Similarly, one can show that $A_{22}$ is also doubly stochastic.
%Noting the trace condition we have $ \sum A_{22} e = m $. Since $ A_{22} \geq 0 $, and $ A_{22} e \prec e $, we conclude $ A_{22} e = e $, so all row sums are 1.  
%Similarly, testing with $ (x, y)^T = (0, e)^T $ gives $ A_{22}^T e = e $, so $ A_{22} $ is also column stochastic. Thus
%$A_{22}$ is doubly stochastic.
%
%\textbf{Conclusion:}
%
%We have shown that:
%$$
%A = \begin{bmatrix}
%A_{11} & 0 \\
%0 & A_{22}
%\end{bmatrix}, \quad A_{11}, A_{22} \geq 0,\quad A_{11} e_n = e_n,\quad A_{22} e_m = e_m,
%$$
%and similarly:
%$$
%A_{11}^T e_n = e_n,\quad A_{22}^T e_m = e_m.
%$$
%Therefore, $ A_{11} $ and $ A_{22} $ are doubly stochastic matrices, and
%$A = A_{11} \oplus A_{22}$, with  $A_{11}, A_{22}$ doubly stochastic.
\end{proof}

We now define the block analogue of the classical $T$-transform.
\begin{defn}[Block $T$-transform]
\label{def:block-T}
A linear map $T:\R^{n+m}\to\R^{n+m}$ is called a {block $T$-transform} if either
\[
T=T_1\oplus I_m \mbox{ or } T=I_n\oplus T_2
\]
where $T_1$ is a classical $T$-transform on $\R^n$ and $T_2$ is a classical $T$-transform on $\R^m$.
\end{defn}
The following is the block analog of the classical $T$-transform.
\begin{lem}
\label{lem:block-T-star}
If $T$ is a block $T$-transform and $u=(x,y)$, then
\[
(Tu)^\natural \stmaj u^\natural,
\]
where $u^\natural$ denotes the non-increasing  block order, that is, $(x^\ds,y^\ds)$.
\end{lem}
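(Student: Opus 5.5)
The plan is to reduce the statement to Theorem~\ref{thm:block-ds-star} (or, equivalently, to Proposition~\ref{prop:block-from-classical}) by noting that a block $T$-transform is a doubly stochastic block matrix.

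First, a classical $T$-transform $T_1$ on $\R^n$ has the form $T_1 = tI_n + (1-t)P_{jk}$. Here $P_{jk}$ is the transposition matrix exchanging coordinates $j$ and $k$, and $t\in[0,1]$. Since $P_{jk}$ and $I_n$ are permutation matrices and $t\in[0,1]$, $T_1$ is a convex combination of permutation matrices. Hence $T_1$ is doubly stochastic, and the same holds for $T_2$ on $\R^m$.

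Consequently, $T=T_1\oplus I_m$ and $T=I_n\oplus T_2$ are both direct sums of doubly stochastic matrices, that is, doubly stochastic block matrices. Equivalently, $T$ lies in the convex hull of the block permutation matrices $I_n\oplus I_m$ and $P_{jk}\oplus I_m$ (respectively $I_n\oplus P_{jk}$).

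Next, consider the case $T=T_1\oplus I_m$ and write $u=(x,y)$, so that $Tu=(T_1x,y)$. By Theorem~\ref{thm:classical-ds}, $T_1x\prec x$. Trivially $y\prec y$. Proposition~\ref{prop:block-from-classical} then yields $((T_1x)^\ds,y^\ds)\stmaj(x^\ds,y^\ds)$, which is exactly $(Tu)^\natural\stmaj u^\natural$. The case $T=I_n\oplus T_2$ is symmetric. Alternatively, both cases follow at once from the ($\Rightarrow$) direction of Theorem~\ref{thm:block-ds-star}.

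There is no real obstacle. The only point that needs an explicit argument is Proposition~\ref{prop:block-from-classical}, if one does not take it as given. For $k\le n$, the partial sums of $(T_1x)^\ds$ are bounded by those of $x^\ds$ directly from $T_1x\prec x$. For $k=n+\ell$, the first block totals are equal, and the remaining partial sums of $y^\ds$ coincide on both sides. This gives the inequalities, with equality at $k=n+m$.
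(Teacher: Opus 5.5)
Your proof is correct and follows the paper's argument: you write $Tu=(T_1x,y)$, use $T_1x\prec x$, and invoke Proposition~\ref{prop:block-from-classical} with $y\prec y$, treating $I_n\oplus T_2$ symmetrically. Your extra steps (showing $T_1x\prec x$ from $T_1=tI_n+(1-t)P_{jk}$ being doubly stochastic, and sketching Proposition~\ref{prop:block-from-classical}) only fill in details the paper takes as known.
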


\begin{proof}
If $T=T_1\oplus I_m$, then
$Tu=(T_1x,y).$ Since $T_1$ is a classical $T$-transform, $T_1x \prec x.$
By Proposition~\ref{prop:block-from-classical}, we see
\[
((T_1x)^\ds,y^\ds)\stmaj (x^\ds,y^\ds).
\]
The proof for $T=I_n\oplus T_2$ is identical.
\end{proof}

The proof of the following block analog of Theorem~\ref{thm:HLP} is evident.

\begin{thm}[Block analogue of majorization equivalences]
\label{thm:block-equivalences}
Let $x,z\in\R^n$ and $y,w\in\R^m$. Set
\(
u=(x^\ds,y^\ds)\) and \(v=(z^\ds,w^\ds).
\)
Then the following statements are equivalent:
\begin{enumerate}

\item $x\prec z$ and $y\prec w$.
\item $(x,y)$ is obtained from $(z,w)$ by a finite sequence of block $T$-transforms.
\item $(x,y)$ lies in the convex hull of all vectors obtained from $(z,w)$ by permuting coordinates within the first block and within the second block.
\item For some doubly stochastic matrices $A_1\in\R^{n\times n}$ and $A_2\in\R^{m\times m}$, we have
\[
(x,y)^T=(A_1\oplus A_2)(z,w)^T.
\]
\end{enumerate}
\end{thm}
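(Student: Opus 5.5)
The plan is to reduce every statement to classical majorization applied separately on each block, and then invoke Theorem~\ref{thm:HLP} twice, once in $\R^n$ and once in $\R^m$. The basic observation is that all four conditions decouple across the blocks.

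\textbf{Condition 2.} A block $T$-transform acts as a classical $T$-transform on one block and as the identity on the other (Definition~\ref{def:block-T}). Hence a finite sequence of block $T$-transforms taking $(z,w)$ to $(x,y)$ amounts to two things: a sequence of classical $T$-transforms taking $z$ to $x$, and a sequence taking $w$ to $y$, interleaved in some order. Conversely, any two such sequences can be concatenated by first applying those of the form $T_1\oplus I_m$ and then those of the form $I_n\oplus T_2$. So condition 2 is equivalent to: $x$ is obtained from $z$, and $y$ from $w$, by finite sequences of $T$-transforms.

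\textbf{Condition 4.} The identity $(x,y)^T=(A_1\oplus A_2)(z,w)^T$ is literally the pair of identities $x=A_1z$ and $y=A_2w$.

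\textbf{Condition 3.} The set of vectors obtained by block permutations is $\{(P_1z,P_2w)\}$. Its convex hull is the set of vectors $(\sum_{i,j} t_{ij}P_1^{(i)}z,\ \sum_{i,j} t_{ij}P_2^{(j)}w)$. The marginals of $t_{ij}$ give convex combinations in each block, so this hull is contained in the product of the convex hulls of the permutations of $z$ and of $w$. For the reverse inclusion, given convex combinations $\sum_i a_iP_1^{(i)}z$ and $\sum_j b_jP_2^{(j)}w$, take the product weights $t_{ij}=a_ib_j$. Thus the hull equals the Cartesian product of the two classical hulls. This is the same fact as the remark in the paper that the convex hull of block permutation matrices is the set of block doubly stochastic matrices.

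\textbf{Conclusion.} With these reductions in hand, Theorem~\ref{thm:HLP} applied to the pair $(x,z)$ and to the pair $(y,w)$ shows that each of conditions 2, 3, 4 is equivalent to ``$x\prec z$ and $y\prec w$''. This is condition 1.

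A small point of care is the orientation in Theorem~\ref{thm:HLP}. As printed there, the matrix, hull and $T$-transform items are stated with the roles of $x$ and $y$ mixed. The classical content I will use is that $x\prec z$ if and only if $x=Dz$ for some doubly stochastic $D$, if and only if $x$ lies in the convex hull of the permutations of $z$, if and only if $x$ is obtained from $z$ by $T$-transforms, and I will apply it in that form.

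The only step needing a genuine argument is the product-of-hulls identity in condition 3, and it is settled by the product weights $t_{ij}=a_ib_j$. Finally, I will remark that by Proposition~\ref{prop:block-from-classical} these conditions imply $u\stmaj v$, which explains the notation $u,v$ in the statement. I will not claim the converse, which the paper notes fails in general.
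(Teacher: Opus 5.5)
Your proof is correct and is the same argument the paper intends when it calls the proof evident: split each condition into its two blocks and apply Theorem~\ref{thm:HLP} once in $\R^n$ and once in $\R^m$. Your product-weights argument for condition 3 and your observation that the two kinds of block $T$-transforms act on disjoint blocks supply the details the paper omits. One small correction to your side remark: the $T$-transform item of Theorem~\ref{thm:HLP} is correctly oriented as printed; it is the convex-function, matrix and hull items that have $x$ and $y$ reversed.
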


One can extend the notion of $*$-majorization to finitely many blocks in a natural way.

% \begin{defn}[$*$-majorization for multiple blocks]
% Let
% \[
% x^{(j)} \in \R^{n_j}, \qquad y^{(j)} \in \R^{n_j}, \qquad j=1,\dots,m,
% \]
% where \(n_1+\cdots+n_m=N\). Form the concatenated vectors
% \[
% u=(x^{(1)},x^{(2)},\dots,x^{(m)}), \qquad
% v=(y^{(1)},y^{(2)},\dots,y^{(m)}) \in \R^N,
% \]
% where each block is written in a prescribed order. We say that \(u\) is {$*$-majorized} by \(v\), and write
% \[
% u \stmaj v,
% \]
% if
% \[
% \sum_{i=1}^k u_i \le \sum_{i=1}^k v_i, \qquad k=1,\dots,N,
% \]
% with equality for \(k=N\).
% \end{defn}

\begin{defn}[$*$-majorization for multiple blocks]
Let
\[
u=(x^{(1)},\dots,x^{(m)}), \quad 
v=(y^{(1)},\dots,y^{(m)}) \in \mathbb{R}^N,
\]
where $x^{(j)},y^{(j)}\in\mathbb{R}^{n_j}$, $j=1,\dots,m$, and $\sum_{j=1}^m n_j=N$.

Let
\[
u^*=\bigl((x^{(j)})^{\sigma_j}\bigr)_{j=1}^m, \qquad
v^*=\bigl((y^{(j)})^{\tau_j}\bigr)_{j=1}^m,
\]
with $\sigma_j,\tau_j\in\{\uparrow,\downarrow\}$.

We write $u\prec^* v$ if
\[
\sum_{i=1}^k u_i^* \le \sum_{i=1}^k v_i^*, \quad k=1,\dots,N-1,
\qquad
\sum_{i=1}^N u_i^*=\sum_{i=1}^N v_i^*.
\]
\end{defn}

%The natural class of linear operators in this setting the doubly stochastic block matrices,
%where each $A_j$ is doubly stochastic. As in the two-block case, such operators are characterized by their preservation of $*$-majorization when each block is arranged in the prescribed order.
%This characterization is consistent with the two-block result given in Theorem~\ref{thm:block-ds-star}.

Existing results for the two-block setting extend naturally to the case of finitely many blocks. In particular, the appropriate class of linear operators is given by block diagonal matrices
\[
A=A_1\oplus \cdots \oplus A_k,
\]
where each \(A_j\) is doubly stochastic. As in the two-block case, these operators are characterized by their preservation of \( * \)-majorization when the coordinates within each block are arranged according to the prescribed order. Consequently, the analogues of Theorems~\ref{thm:block-ds-star} and~\ref{thm:block-equivalences} remain valid for finitely many blocks, with essentially the same proofs and notation adapted blockwise.

\section{Eigenvalue Inequalities and 
$*$-Majorization}

We begin by recalling two results on the variation of eigenvalues for Hermitian matrices: 
Cauchy’s Interlacing Theorem (see \cite[Corollary III.1.5]{Bhatia1997}) and Ky Fan’s inequality (see \cite{F0}). 
These results provide powerful tools for comparing the eigenvalues of a matrix and its principal submatrices 
will play a central role in our analysis.

\begin{thm}[Cauchy's Interlacing Theorem]
\label{2025-11-24-Cauchy}
Let $ A $ be a Hermitian matrix of size $n$ and let $M$ be a principal submatrix of $ A $ of size $m$. Their eigenvalues satisfy the following interlacing inequalities:
$$
\lambda_j^\downarrow(A) \ge \lambda_j^\downarrow(M) \ge \lambda_{n-m+j}^\downarrow(A), \text{ for all } j = 1, \dots, m.
$$
\end{thm}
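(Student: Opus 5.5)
The plan is to derive both inequalities from the Courant--Fischer min-max characterization of eigenvalues. First, simultaneously permute rows and columns. This is a unitary similarity, so it changes neither $\lambda(A)$ nor the spectrum of the submatrix. After it we may assume $M=A[\{1,\dots,m\}]$ is the leading $m\times m$ principal submatrix. Let $\iota:\mathbb{C}^m\to\mathbb{C}^n$ be the isometric embedding $v\mapsto (v,0)$. The key identity is
\[
\frac{\langle M v,v\rangle}{\langle v,v\rangle}=\frac{\langle A\iota v,\iota v\rangle}{\langle \iota v,\iota v\rangle}, \qquad v\neq 0.
\]
It holds because $\iota^* A\iota = M$. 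Thus Rayleigh quotients of $M$ are exactly Rayleigh quotients of $A$ restricted to the $m$-dimensional subspace $\iota(\mathbb{C}^m)$.

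For the upper bound, I will use the max-min form
\[
\lambda_j^\downarrow(M)=\max_{\dim V=j,\ V\subseteq\mathbb{C}^m}\ \min_{0\neq v\in V}\frac{\langle Mv,v\rangle}{\langle v,v\rangle}.
\]
Each such $V$ gives a $j$-dimensional subspace $\iota(V)\subseteq\mathbb{C}^n$ with the same minimum Rayleigh quotient. Hence the maximum for $M$ is taken over a subfamily of the subspaces used for $A$, and so $\lambda_j^\downarrow(M)\le\lambda_j^\downarrow(A)$.

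For the lower bound, I will use the min-max form
\[
\lambda_j^\downarrow(M)=\min_{\dim V=m-j+1,\ V\subseteq\mathbb{C}^m}\ \max_{0\neq v\in V}\frac{\langle Mv,v\rangle}{\langle v,v\rangle}.
\]
The embedded subspace $\iota(V)$ has dimension $m-j+1$. Set $k=n-m+j$, so that $m-j+1=n-k+1$. The same min-max formula for $A$ gives
\[
\lambda_k^\downarrow(A)=\min_{\dim W=n-k+1,\ W\subseteq\mathbb{C}^n}\ \max_{0\neq w\in W}\frac{\langle Aw,w\rangle}{\langle w,w\rangle}.
\]
This minimum runs over a larger family than the embedded subspaces $\iota(V)$, so $\lambda_{n-m+j}^\downarrow(A)\le\lambda_j^\downarrow(M)$.

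I do not expect a genuine obstacle; the only delicate point is the index bookkeeping. One must check that the codimension-type count $m-j+1$ in $\mathbb{C}^m$ matches $n-k+1$ in $\mathbb{C}^n$ exactly when $k=n-m+j$, and that $1\le k\le n$ for $1\le j\le m$.

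As a fallback, I would prove the case $m=n-1$ directly and then induct on $n-m$. Chaining single-step interlacings gives $\lambda_j^\downarrow(A)\ge\lambda_j^\downarrow(M)$, and the index shift accumulates to $n-m$, giving $\lambda_j^\downarrow(M)\ge\lambda_{n-m+j}^\downarrow(A)$. The base case $m=n-1$ can be done either with the same Courant--Fischer argument or via the secular equation for a bordered Hermitian matrix.
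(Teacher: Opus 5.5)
Your proof is correct: the identity $\iota^*A\iota=M$ makes Rayleigh quotients of $M$ coincide with those of $A$ on an $m$-dimensional subspace, and comparing the max--min and min--max forms of Courant--Fischer (with $m-j+1=n-k+1$ for $k=n-m+j$) gives both inequalities. The paper gives no proof of its own; it cites Bhatia, Corollary III.1.5, which is derived there from the same minimax principle, so your route is essentially the standard one being invoked.
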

The interlacing in the theorem can be stated equivalently as
$$
\lambda_{n-m+i}^\uparrow(A) \ge \lambda_{i}^\uparrow(M) \ge \lambda_{i}^\uparrow(A), \text{ for all } i = 1, \dots, m,
$$
or 
\begin{align}
\label{2025-11-24-interlac}
\lambda_{m-i+1}^\downarrow(A) \ge \lambda_{i}^\uparrow(M) \ge \lambda_{i}^\uparrow(A), \text{ for all } i = 1, \dots, m.
\end{align}

%\subsection*{Compression, projections, and Ky Fan’s variational principles}

Next, we recall Ky Fan’s variational principle. Let $A$ be a Hermitian matrix acting on $\mathbb{C}^r$, and let
$V\subset\mathbb{C}^r$ be a subspace of dimension $k$.  Let $P_V$ denote the orthogonal projection onto $V$. 
The {\it compression of $A$ to $V$} is the Hermitian matrix defined by
$A|_V := P_V A P_V$ acting on  $V,$ 
or equivalently,
\[
A|_V(x)=P_V(Ax), \qquad x\in V.
\]

If $\{u_1,\dots,u_k\}$ is an orthonormal basis of $V$, then the matrix of $A|_V$
with respect to this basis is
\[
(A|_V)_{ij}=\langle A u_j,u_i\rangle, \qquad i,j=1,\dots,k.
\]
Consequently,
\[
\operatorname{tr}(A|_V)
=
\sum_{i=1}^k \langle A u_i,u_i\rangle
=
\operatorname{tr}(P_VA).
\]
In particular, if $V=\mathrm{span}\{e_i:\, i\in\beta\}$ is a coordinate subspace of $\mathbb{C}^r$,
then the compression $A|_V$ is the principal submatrix
$A[\beta]$. We are now ready to state the Ky Fan’s variational principles, see \cite{F0}.

\begin{thm}
Let $A$ be a Hermitian matrix acting on $\mathbb{C}^r$. Then
\begin{equation} 
\label{KFmax}
\sum_{i=1}^k \lambda_i^\downarrow(A)
=
\max_{\substack{V\subset\mathbb{C}^r\\ \dim V=k}}
\operatorname{tr}(A|_V) \,\,\, \mbox{ and } \,\,\,  
\sum_{i=1}^k \lambda_i^\uparrow(A)
=
\min_{\substack{V\subset\mathbb{C}^r\\ \dim V=k}}
\operatorname{tr}(A|_V),
\end{equation}
for every $k=1,\dots,r$.
\end{thm}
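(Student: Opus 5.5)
We will prove the maximum formula in \eqref{KFmax} and obtain the minimum formula by applying it to $-A$. Since $\lambda_i^\downarrow(-A)=-\lambda_i^\uparrow(A)$ and $\operatorname{tr}((-A)|_V)=-\operatorname{tr}(A|_V)$, the maximum for $-A$ is exactly minus the minimum for $A$. Throughout, fix an orthonormal eigenbasis $v_1,\dots,v_r$ of $A$ with $Av_j=\lambda_j^\downarrow(A)v_j$.

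\emph{Upper bound.} Take a subspace $V$ with $\dim V=k$ and extend an orthonormal basis $u_1,\dots,u_k$ of $V$ to an orthonormal basis $u_1,\dots,u_r$ of $\mathbb{C}^r$. Let $U$ be the unitary matrix with columns $u_1,\dots,u_r$. Then $U^*AU$ is Hermitian, has the same eigenvalues as $A$, and its leading $k\times k$ principal submatrix is exactly the matrix $(\langle Au_j,u_i\rangle)$ of $A|_V$. So we may regard $A|_V$ as a principal submatrix of a unitary conjugate of $A$. Cauchy's Interlacing Theorem~\ref{2025-11-24-Cauchy} then gives $\lambda_j^\downarrow(A|_V)\le \lambda_j^\downarrow(A)$ for $j=1,\dots,k$. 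Summing over $j$ yields $\operatorname{tr}(A|_V)\le \sum_{i=1}^k\lambda_i^\downarrow(A)$.

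\emph{Attainment.} Choose $V=\operatorname{span}\{v_1,\dots,v_k\}$. With the basis $v_1,\dots,v_k$ we have $\langle Av_i,v_i\rangle=\lambda_i^\downarrow(A)$, so $\operatorname{tr}(A|_V)=\sum_{i=1}^k\lambda_i^\downarrow(A)$. The maximum is therefore achieved, and the first identity holds.

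As a cross-check independent of interlacing, one can also argue directly. We have $\operatorname{tr}(A|_V)=\operatorname{tr}(P_VA)=\sum_{j=1}^r \lambda_j^\downarrow(A)\,d_j$, where $d_j=\|P_Vv_j\|^2\in[0,1]$ and $\sum_j d_j=\operatorname{tr}P_V=k$. A linear functional with non-increasing coefficients, maximized over such weights $d$, is maximized by putting weight $1$ on the first $k$ indices. This uses the elementary exchange argument: moving mass from a later index to an earlier one does not decrease the sum.

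The only point needing care is the identification of $A|_V$ with a principal submatrix of $U^*AU$. The trace of $A|_V$ does not depend on the choice of orthonormal basis of $V$, and its eigenvalues are likewise basis-independent, so the conjugation is legitimate. Everything else is routine.
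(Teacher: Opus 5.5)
Your proof is correct. The reduction of the minimum formula to the maximum formula via $-A$ is sound. So is the upper bound, where you view the matrix of $A|_V$ in an orthonormal basis $u_1,\dots,u_k$ of $V$ as the leading $k\times k$ principal submatrix of $U^*AU$ and sum the inequalities $\lambda_j^\downarrow(A|_V)\le\lambda_j^\downarrow(A)$ from Theorem~\ref{2025-11-24-Cauchy}. Attainment on the span of the top $k$ eigenvectors is also fine.

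The paper does not prove this statement at all; it simply quotes Ky Fan's principle from \cite{F0}. So there is no in-paper argument to compare against, and your write-up supplies a self-contained justification the paper omits.

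Of your two routes, the interlacing one is shortest given what the paper already imports. It is not circular, because Cauchy interlacing as cited from \cite[Corollary III.1.5]{Bhatia1997} is derived from the Courant--Fischer minimax principle, not from Ky Fan's principle. It also only uses the upper half of the interlacing inequalities.

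Your ``cross-check'' via the weights $d_j=\|P_Vv_j\|^2\in[0,1]$ with $\sum_j d_j=k$ needs nothing beyond the spectral theorem. It is essentially the classical direct argument for Fan's principle. The exchange step can be replaced by a one-line estimate: writing $\lambda_j:=\lambda_j^\downarrow(A)$,
$\sum_j\lambda_j d_j-\sum_{j\le k}\lambda_j=\sum_{j\le k}\lambda_j(d_j-1)+\sum_{j>k}\lambda_j d_j\le\lambda_k\bigl(\sum_j d_j-k\bigr)=0$.
Either argument alone suffices for the upper bound.
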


%\medskip
%\noindent
%Since $\operatorname{tr}(A|_V)=\operatorname{tr}(P_VA)$, Ky Fan’s inequalities can
%be formulated interchangeably in terms of projections or compressions. This
%equivalence is fundamental when applying Ky Fan’s principles to compressions
%$A|_V$ that are unitarily similar to the principal submatrices of $A$. These results provide a bridge between spectral properties and variational characterizations. 
%They will allow us to derive structured inequalities for the eigenvalues of principal submatrices.

%The next result can be viewed as an extension of a connection between eigenvalue majorization and inequalities involving product of principal minors of a positive definite matrix. 

% The next result extends the potential relationships between eigenvalue majorization and Ky Fan–type variational inequalities to inequalities. 

We now present our main result, which establishes a family of $*$-majorization relations among the eigenvalue vectors of principal submatrices. These relations may be viewed as a spectral counterpart to Koteljanskii’s inequality.

\begin{thm}
\label{2025-11-24-thm}
Let $A$ be a positive definite matrix and $\alpha,\beta \subseteq N$.
Let
\begin{align*}
 x = \lambda(A[\alpha]) \in \mathbb{R}^n, \quad 
 y = \lambda(A[\beta]) \in \mathbb{R}^m, \quad 
 z = \lambda(A[\alpha \cup \beta]) \in \mathbb{R}^r, \quad 
 w = \lambda(A[\alpha \cap \beta]) \in \mathbb{R}^s.
\end{align*}
Then the following relationships hold:
\begin{enumerate}[(1)]
 \item \label{2026-01-27-maj1} $(x^{\downarrow},y^{\uparrow})\prec^*(z^{\downarrow},w^{\downarrow})$,
 \item \label{2026-01-27-maj2} $(x^{\downarrow},y^{\uparrow})\prec^*(z^{\downarrow},w^{\uparrow})$,
 \item \label{2026-01-27-maj3} $(x^{\downarrow},y^{\downarrow})\prec^*(z,w)^{\downarrow}$.
\end{enumerate}
\end{thm}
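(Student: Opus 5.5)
The plan is to verify the partial–sum inequalities of Definition~\ref{$*$-majorization} block by block, using only Cauchy's Interlacing Theorem~\ref{2025-11-24-Cauchy} and Ky Fan's principle \eqref{KFmax}. Write $n=|\alpha|$, $m=|\beta|$, $r=|\alpha\cup\beta|$, $s=|\alpha\cap\beta|$, so that $n+m=r+s$, $n\le r$ and $s\le m$.

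\textbf{Total sums.} The equality for the full sum is the same in all three parts, and it is just inclusion–exclusion on diagonal entries:
\[
\operatorname{tr}A[\alpha]+\operatorname{tr}A[\beta]=\operatorname{tr}A[\alpha\cup\beta]+\operatorname{tr}A[\alpha\cap\beta].
\]
Consequently, each inequality for $k$ may equivalently be checked as a reversed inequality between the tail sums of length $n+m-k$.

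\textbf{Parts (1) and (2).} In both parts $u^*=(x^\ds,y^\us)$, and $v^*$ begins with $z^\ds$. I split the range of $k$ into three pieces.
\begin{itemize}
\item For $k\le n$, interlacing of $A[\alpha]$ inside $A[\alpha\cup\beta]$ gives $\sum_{i\le k}x^\ds_i\le\sum_{i\le k}z^\ds_i$.
\item For $n<k\le r$, put $j=k-n\le|\beta\setminus\alpha|$. By Ky Fan's minimum principle applied to $A[\beta\setminus\alpha]$, there is a $j$-dimensional $W\subset\mathrm{span}\{e_i: i\in\beta\setminus\alpha\}$ with $\operatorname{tr}(A|_W)=\sum_{i\le j}\lambda^\us_i(A[\beta\setminus\alpha])$. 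By interlacing \eqref{2025-11-24-interlac} this is at least $\sum_{i\le j}y^\us_i$.
\item Still in the range $n<k\le r$, let $V=\mathrm{span}\{e_i:i\in\alpha\}\oplus W$. This is an orthogonal sum of dimension $k$ inside $\mathbb C^{\alpha\cup\beta}$, so $\operatorname{tr}(A|_V)=\operatorname{tr}A[\alpha]+\operatorname{tr}(A|_W)$. Ky Fan's maximum principle for $A[\alpha\cup\beta]$ then gives
\[
\sum_i x_i+\sum_{i\le j}y^\us_i\le\operatorname{tr}(A|_V)\le\sum_{i\le k}z^\ds_i .
\]
\item For $k>r$, I pass to tails of length $t=n+m-k\le s\le m$. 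The tail of $u^*$ is the sum of the $t$ largest entries of $y$. The tail of $v^*$ is the sum of the $t$ smallest entries of $w$ in (1), and of the $t$ largest in (2). In both cases it is at most the sum of the $t$ largest entries of $w$, which by interlacing ($\alpha\cap\beta\subseteq\beta$) is at most the sum of the $t$ largest entries of $y$.
\end{itemize}

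\textbf{Part (3).} For $k\le n$, the argument is the same as above, since $\sum_{i\le k}z^\ds_i\le\sum_{i\le k}(z,w)^\ds_i$. For $k=n+j$ with $1\le j\le m$ I must show
\[
\operatorname{tr}A[\alpha]+\sum_{i\le j}y^\ds_i\le\max_{p+q=k}\Big(\sum_{i\le p}z^\ds_i+\sum_{i\le q}w^\ds_i\Big).
\]
The approach I will try first is as follows.
\begin{itemize}
\item Take an optimal $j$-dimensional $W\subset\mathbb C^{\beta}$ from Ky Fan's maximum principle, and set $X=\mathrm{span}\{e_i:i\in\alpha\}$.
\item Decompose the pair $(X,W)$ by the two-subspace (Halmos) decomposition into $X\cap W$, $X\cap W^\perp$, $X^\perp\cap W$, and two-dimensional generic pieces.
\item Aim to rewrite $\operatorname{tr}(P_XA)+\operatorname{tr}(P_WA)$ as $\operatorname{tr}(P_{V_1}A)+\operatorname{tr}(P_{V_2}A)$, with $V_1\subset\mathbb C^{\alpha\cup\beta}$ and $V_2\subset\mathbb C^{\alpha\cap\beta}$, $\dim V_1+\dim V_2=k$, possibly up to an error of the right sign.
\item When $W$ can be chosen so that $P_W$ commutes with $P_X$ (for instance if $W$ splits as $(W\cap\mathbb C^{\alpha\cap\beta})\oplus(W\cap\mathbb C^{\beta\setminus\alpha})$), this rewriting is exact, with $V_1=X+W$ and $V_2=X\cap W$. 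The claim then follows from Ky Fan applied separately to $A[\alpha\cup\beta]$ and $A[\alpha\cap\beta]$.
\end{itemize}

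\textbf{Main obstacle.} The hard step is the non-commuting case in (3). A crude estimate of $\operatorname{tr}(A(P_X+P_W))$ by weights in $[0,2]$ only yields bounds in terms of $z$ alone. Those bounds are too weak, since the large entries of $z$ dominate those of $w$, and the target needs the corresponding terms to come from $w$. If the subspace argument stalls, I would instead try an induction on $|\beta\setminus\alpha|$. The base case $\beta\subseteq\alpha$ is trivial. For the inductive step, adding one index to $\beta$ changes $y$ and $z$ by interlacing sequences, and I would control how the top partial sums of $y$ and of $(z,w)^\ds$ move relative to each other.
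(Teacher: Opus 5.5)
\textbf{Verdict: genuine gap in part (3).} Parts (1) and (2) are correct; part (3) is not proved for $k>n$.

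\textbf{Parts (1) and (2).} Your treatment here is sound.
\begin{itemize}
\item The range $n<k\le r$, handled via the orthogonal sum $\mathrm{span}\{e_i:i\in\alpha\}\oplus W$ and Ky Fan's maximum principle, is the paper's pinching step $(x,\eta)\prec z$ in different clothing.
\item Your tail argument for $k>r$ is exactly the paper's argument for (2).
\item For (1) in that range, your tail argument is simpler than the paper's Ky Fan subspace construction.
\end{itemize}

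\textbf{The gap in part (3).} For $k=n+j$ with $1\le j\le m$ you have no proof. The Halmos two-subspace route only closes when $P_W$ commutes with $P_X$, which you cannot arrange in general. The proposed induction on $|\beta\setminus\alpha|$ is only a hope. As written, (3) is established only for $k\le n$.

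\textbf{The missing idea.} You already stated it at the start but did not use it here: pass to tails. Because the totals agree, the inequality for $k=n+j$ is equivalent to
\[
\sum_{i=1}^{p} y_i^{\uparrow}\;\ge\;\sum_{i=1}^{p}(z,w)_i^{\uparrow},\qquad p=m-j.
\]
This compares the $p$ smallest entries of $y$ with the $p$ smallest entries of $(z,w)$, and a bound in terms of $z$ alone now suffices.
\begin{itemize}
\item Since $A[\beta]$ is a principal submatrix of $A[\alpha\cup\beta]$ (and $m\le r$), interlacing gives $y_i^{\uparrow}\ge z_i^{\uparrow}$ for $i\le m$. Hence $\sum_{i\le p}y_i^{\uparrow}\ge\sum_{i\le p}z_i^{\uparrow}$.
\item $\sum_{i\le p}z_i^{\uparrow}$ is a sum of $p$ entries of $(z,w)$, so it is at least the sum of the $p$ smallest ones.
\end{itemize}

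\textbf{Why your obstacle appeared.} The obstacle you identified (``bounds in terms of $z$ alone are too weak'') comes from working with head sums, where the large entries of $w$ must actually be produced. In the tail form, adding the entries of $w$ to the pool can only lower the right-hand side, so they only help.

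\textbf{Comparison with the paper.} The paper also proves (3) in this tail form. It takes a Ky Fan minimizing subspace $F\subseteq\mathbb{C}^{\beta}$, splits it as $(F\cap\mathbb{C}^{\alpha\cap\beta})\oplus F_2$, and bounds the two pieces from below by $w$ and by $z$ respectively. As the argument above shows, that splitting is not even needed: $F\subseteq\mathbb{C}^{\alpha\cup\beta}$ alone already gives the bound by $z$.
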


% \begin{thm}
% \label{2025-11-24-thm}
%  Let $A$ be a positive definite matrix and  $\alpha,\beta\subseteq N$.
%  Let \begin{align*}
%  x = \lambda(A[\alpha]) \in  \mathbb{R}^n, \,\,\, 
%  y = \lambda(A[\beta]) \in  \mathbb{R}^m, \,\,\, 
% z = \lambda(A[\alpha \cup \beta]) \in  \mathbb{R}^r, \,\,\, 
% w = \lambda(A[\alpha \cap \beta]) \in  \mathbb{R}^s.
% \end{align*}
% Then the following relationships hold:
% \begin{enumerate}[(1).]
%  \item \label{2026-01-27-maj1} $(x^{\downarrow},y^{\uparrow})\prec^*(z^{\downarrow},w^{\downarrow})$,
%  \item \label{2026-01-27-maj2} $(x^{\downarrow},y^{\uparrow})\prec^*(z^{\downarrow},w^{\uparrow})$,
%  \item \label{2026-01-27-maj3} $(x^{\downarrow},y^{\downarrow})\prec^*(z,w)^{\downarrow}$,
%  \item \label{2026-01-27-maj4} $(x^{\uparrow},y^{\uparrow})\ ^*\hspace{-0.1cm}\succ(z,w)^{\uparrow}$, %\footnote{What notation should we use $\succ^*$ or $^*\hspace{-0.1cm}\succ$?}  
%  \item \label{2026-01-27-maj5} $(x^{\uparrow},y^{\downarrow})\prec^*(z,w)^{\downarrow}$,
%  \item \label{2026-01-27-maj6} $(x^{\uparrow},y^{\uparrow})\prec^*(z,w)^{\downarrow}$,
%  \item \label{2026-01-27-maj8} $(x^{\downarrow},y^{\uparrow})\prec^*(z,w)^{\downarrow}$,
%  \item \label{2026-01-27-maj7} $(x^{\uparrow},y^{\downarrow})\ ^*\hspace{-0.1cm}\succ(z,w)^{\uparrow}.$
% \end{enumerate}
% \end{thm}

\begin{proof}
\begin{alignat*}{3}
X&:=\mathrm{span}\{e_i:\ i\in\alpha\},\qquad
&&Y&&:=\mathrm{span}\{e_i:\ i\in\beta\}, \\
Z&:=\mathrm{span}\{e_i:\ i\in\alpha\cup\beta\},\qquad
&&W&&:=\mathrm{span}\{e_i:\ i\in\alpha\cap\beta\}.
\end{alignat*}
Then
$\dim X=n$, $\dim Y=m$, $\dim Z=r$, $\dim W=s$,
with $n+m=r+s$.
Since compressions to coordinate subspaces are the corresponding principal
submatrices: $A[\alpha] =  A|_X$,
$A[\beta]= A|_Y$, $A[\alpha\cup\beta]= A|_Z$, $A[\alpha\cap\beta]= A|_W$, the vectors of eigenvalues of $A|_X,A|_Y,A|_Z,A|_W$ are $x,y,z,w$, respectively.

 \eqref{2026-01-27-maj1}. 
Let
\[
n = |\alpha|,\quad m = |\beta|,\quad r = |\alpha \cup \beta|,\quad s = |\alpha \cap \beta|.
\]
Then \(n+m = r+s\). We verify the  inequalities
\((x^\downarrow, y^\uparrow) \prec^{*} (z^\downarrow, w^\downarrow)\).

\medskip
\noindent
\textit{Case 1: Suppose \(1 \le k \le n\).}
Since \(A[\alpha]\) is a principal submatrix of \(A[\alpha \cup \beta]\), the inequalities of 
Cauchy's interlacing theorem give
\[
\sum_{i=1}^k x_i^\downarrow \le \sum_{i=1}^k z_i^\downarrow.
\]

\medskip
\noindent
\textit{Case 2: Let \(n < k \le r\).}
Write \(k = n+\ell\), where \(1 \le \ell \le r-n\).
Let \(\delta = \beta \setminus \alpha\) and \(\eta = \lambda(A[\delta])\),
so that \(|\delta| = r-n\). Then
\[
\sum_{i=1}^k (x^\downarrow, y^\uparrow)_i
=
\sum_{i=1}^n x_i + \sum_{j=1}^\ell y_j^\uparrow.
\]
Since \(A[\delta]\) is a principal submatrix of \(A[\beta]\), interlacing yields
\[
\sum_{j=1}^\ell y_j^\uparrow
\le
\sum_{j=1}^\ell \eta_j^\uparrow.
\]
Moreover, using the well-known pinching inequality (see \cite{Bhatia1997}) applied to \(A[\alpha \cup \beta]\)
with respect to the decomposition \(\alpha \cup \beta = \alpha \,{\cup}\, \delta\),
we have
$(x,\eta) \prec z.$ 
Thus
\[
\sum_{i=1}^{n+\ell} (x,\eta)_i^\downarrow
\le
\sum_{i=1}^{n+\ell} z_i^\downarrow.
\]
Since the sum of the largest \(n+\ell\) entries of the vector \((x,\eta)\) is at least
\(\sum_{i=1}^n x_i + \sum_{j=1}^\ell \eta_j^\uparrow\), it follows that
\[
\sum_{i=1}^n x_i + \sum_{j=1}^\ell y_j^\uparrow
\le
\sum_{i=1}^{n+\ell} z_i^\downarrow,
\]
which proves the desired inequality in this case.

\medskip
\noindent
\textit{Case 3: Lastly, assume that \(r < k \le r+s\).}
Write \(k = r+t\), where \(1 \le t \le s\), and let
\(\gamma = \alpha \cap \beta\). Then
$\beta = (\beta \setminus \alpha) \,{\cup}\, \gamma$,
$|\beta \setminus \alpha| = r-n$, and $|\gamma| = s$.
We have
\[
\sum_{i=1}^k (x^\downarrow, y^\uparrow)_i
=
\sum_{i=1}^n x_i + \sum_{j=1}^{(r-n)+t} y_j^\uparrow.
\]

By the Ky Fan minimum principle,
\[
\sum_{j=1}^{(r-n)+t} \lambda_j^\uparrow(A[\beta])
=
\min_{\dim V = (r-n)+t} \operatorname{tr}(P_V A[\beta]),
\]
and hence for any subspace \(V\) of dimension \((r-n)+t\),
\[
\sum_{j=1}^{(r-n)+t} y_j^\uparrow
\le
\operatorname{tr}(P_V A[\beta]).
\]

Choose
$ V = \mathbb{R}^{\beta \setminus \alpha} \oplus W,$ 
where \(W \subseteq \mathbb{R}^{\gamma}\) is spanned by eigenvectors of
\(A[\gamma]\) corresponding to its \(t\) largest eigenvalues. Then
$
\dim V = (r-n) + t.
$

With respect to the orthogonal decomposition
$\beta = (\beta \setminus \alpha) \,{\cup}\, \gamma,
$ 
the projection \(P_V\) is block diagonal, and therefore
\[
\operatorname{tr}(P_V A[\beta])
=
\operatorname{tr}(A[\beta \setminus \alpha])
+
\operatorname{tr}(P_W A[\gamma]).
\]
From the choice of \(W\),
\[
\operatorname{tr}(P_W A[\gamma])
=
\sum_{j=1}^t \lambda_j^\downarrow(A[\gamma])
=
\sum_{j=1}^t w_j^\downarrow.
\]
Thus
\[
\sum_{j=1}^{(r-n)+t} y_j^\uparrow
\le
\operatorname{tr}(A[\beta \setminus \alpha])
+
\sum_{j=1}^t w_j^\downarrow.
\]

Consequently,
\[
\sum_{i=1}^k (x^\downarrow, y^\uparrow)_i
\le
\operatorname{tr}(A[\alpha])
+
\operatorname{tr}(A[\beta \setminus \alpha])
+
\sum_{j=1}^t w_j^\downarrow.
\]

Since
$ \alpha \cup \beta = \alpha \,{\cup}\, (\beta \setminus \alpha),$  
we have
\[
\operatorname{tr}(A[\alpha]) + \operatorname{tr}(A[\beta \setminus \alpha])
=
\operatorname{tr}(A[\alpha \cup \beta])
=
\sum_{i=1}^r z_i^\downarrow.
\]
Therefore,
\[
\sum_{i=1}^k (x^\downarrow, y^\uparrow)_i
\le
\sum_{i=1}^r z_i^\downarrow + \sum_{j=1}^t w_j^\downarrow
=
\sum_{i=1}^k (z^\downarrow, w^\downarrow)_i.
\]
\medskip
Finally, for \(k = n+m = r+s\), both sides equal the total sum of eigenvalues,
and the equality follows from
\begin{equation}\label{2026-01-28-traces}
    \operatorname{tr}(A[\alpha]) + \operatorname{tr}(A[\beta])
=
\operatorname{tr}(A[\alpha \cup \beta]) + \operatorname{tr}(A[\alpha \cap \beta]).
\end{equation}
This completes the proof.

\eqref{2026-01-27-maj2}. Observe that the partial sums of $x^{\downarrow}$ are the same as in the previous case. Further, verifying the mixed sums involving $x^{\downarrow}$ and $y^{\uparrow}$, up to the first $r-n$ $y$'s, is also identical to the previous case.

For the remaining $s$ inequalities, consider the trace equality
$$
\sum_{i=1}^{n} x_i^\downarrow + \sum_{j=1}^{m} y_j^\uparrow = \sum_{i=1}^{r} z_i^\downarrow + \sum_{j=1}^{s} w_j^\uparrow.
$$
Additionally, since $ A[\alpha \cap \beta]$ (of size $s$) is a principal submatrix of $A[\beta]$ (of size $m$), Cauchy's interlacing theorem implies
$$
y_{m - s + j}^\uparrow \geq w_{j}^\uparrow \quad \text{for } j = 1, \dots, s.
$$

Fix $ p \in \{1, \dots, s\} $. Subtracting the $p$ largest terms from both sides of the trace equality, namely, $y_{m - p + 1}^\uparrow, \dots, y_m^\uparrow$ on the left and $w_{s - p + 1}^\uparrow, \dots, w_s^\uparrow$ on the right and applying the inequalities above, we obtain
$$
\sum_{i=1}^{n} x_i^\downarrow + \sum_{j=1}^{m - p} y_j^\uparrow \leq \sum_{i=1}^{r} z_i^\downarrow + \sum_{j=1}^{s - p} w_j^\uparrow,
$$
for all $ p = 1, \dots, s $.

\eqref{2026-01-27-maj3}. Let $b:=(z,w)^\downarrow$
denote the vector obtained from $(z,w)$ by rearranging all entries in non-increasing order.
Since $A|_U$ is a compression of $A|_S$, Cauchy’s interlacing theorem gives
\[
x_j^\downarrow \le z_j^\downarrow,
\qquad j=1,\dots,n.
\]
Hence,
\[
\sum_{j=1}^k x_j^\downarrow \le \sum_{j=1}^k z_j^\downarrow \le \sum_{j=1}^k b_j,
\qquad k=1,\dots,n,
\]
since $b=(z,w)^\downarrow$ consists of the largest entries among those of $z$ and
$w$. 
Next, the trace equality gives
\[
\sum_{i=1}^n x_i^\downarrow + \sum_{j=1}^m y_j^\downarrow
=
\sum_{i=1}^{r+s} (z,w)_i^\downarrow.
\]
% Fix $\ell\in\{1,\dots,m\}$ and set
% \[
% p:=m-\ell.
% \]
% By Condition (3), the desired inequality
% \[
% \sum_{i=1}^n x_i^\downarrow+\sum_{j=1}^\ell y_j^\downarrow
% \le
% \sum_{i=1}^{n+\ell}(z,w)_i^\downarrow
% \]
% is equivalent to
% \[
% \sum_{j=\ell+1}^m y_j^\downarrow
% \ge
% \sum_{i=n+\ell+1}^{r+s}(z,w)_i^\downarrow.
% \]
% Since $p=m-\ell$ and $r+s=n+m$, this is the same as
% \[
% \sum_{j=1}^{p} y_j^\uparrow
% \ge
% \sum_{i=1}^{p} (z,w)_i^\uparrow.
% \]

% Now apply Ky Fan's minimum principle to $A|_V$. There exists a
% $p$-dimensional subspace $F\subseteq V$ such that
% \[
% \sum_{j=1}^{p} y_j^\uparrow=\operatorname{tr}(A|_F).
% \]
% Let
% \[
% F_1:=F\cap W,\qquad t:=\dim F_1.
% \]
% Choose $F_2$ to be the orthogonal complement of $F_1$ in $F$. Then
% \[
% F=F_1\oplus F_2,\qquad \dim F_2=p-t,\qquad F_2\subseteq F\subseteq V\subseteq S.
% \]
% Hence
% \[
% \operatorname{tr}(A|_F)=\operatorname{tr}(A|_{F_1})+\operatorname{tr}(A|_{F_2}).
% \]

For a fixed $\ell \in \{1,\dots,m\}$, we need to show the inequality
\[
\sum_{i=1}^n x_i^\downarrow + \sum_{j=1}^\ell y_j^\downarrow
\;\le\;
\sum_{i=1}^{n+\ell} (z,w)_i^\downarrow,
\]
which is equivalent (by subtracting both sides from the trace equality) to
\[
\sum_{j=\ell+1}^m y_j^\downarrow
\;\ge\;
\sum_{i=n+\ell+1}^{r+s} (z,w)_i^\downarrow.
\]

Let $p := m - \ell$ and since $r+s = n+m$, both sides consist of sums of the
$p$ smallest entries of the respective sequences. Therefore,
\[
\sum_{j=\ell+1}^m y_j^\downarrow
=
\sum_{j=1}^{p} y_j^\uparrow,
\qquad
\sum_{i=n+\ell+1}^{r+s} (z,w)_i^\downarrow
=
\sum_{i=1}^{p} (z,w)_i^\uparrow,
\]
and the above inequality becomes
\begin{align}
\label{2026-04-20-ineq}
\sum_{j=1}^{p} y_j^\uparrow
\;\ge\;
\sum_{i=1}^{p} (z,w)_i^\uparrow.
\end{align}

We now focus on proving the last inequality.
By the Ky Fan's minimum principle, there exists a $p$-dimensional subspace $F \subseteq V$, such that
\[
\sum_{j=1}^{p} y_j^\uparrow
=
\operatorname{tr}(A|_F).
\]

Define
$F_1 := F \cap W$ and $t := \dim F_1.$
Let $F_2$ be the orthogonal complement of $F_1$ relative to $F$, that is,
\[
F = F_1 \oplus F_2,
\qquad
\dim F_2 = p - t,
\]
with $F_2 \subseteq F \subseteq V \subseteq S.$
The trace of the compression splits as 
\[
\operatorname{tr}(A|_F)
=
\operatorname{tr}(A|_{F_1})
+
\operatorname{tr}(A|_{F_2}).
\]
%and we have the following equations
%$P_F = P_{F_1} + P_{F_2}$ and $P_{F_1}P_{F_2}=P_{F_2}P_{F_1}=0.$ 
%Therefore,
%\[
%P_F A P_F
%=
%P_{F_1} A P_{F_1}
%+
%P_{F_1} A P_{F_2}
%+
%P_{F_2} A P_{F_1}
%+
%P_{F_2} A P_{F_2}.
%\]
%Taking traces and using the cyclicity property of the trace function, we have $\operatorname{tr}(P_{F_1} A P_{F_2})
%=
%\operatorname{tr}(P_{F_2} P_{F_1} A)
%=
%0$ 
%and similarly
%$\operatorname{tr}(P_{F_2} A P_{F_1})=0.$ 
%Hence,
%\[
%\operatorname{tr}(P_F A|_F)
%=
%\operatorname{tr}(P_{F_1} A|_{F_1})
%+
%\operatorname{tr}(P_{F_2} A|_{F_2}).
%\]
%
Since $F_1 \subseteq W$ and $\dim F_1 = t$, Ky Fan's minimum principle applied to $A|_W$ implies
\[
\operatorname{tr}(A|_{F_1})
\;\ge\;
\sum_{j=1}^t w_j^\uparrow.
\]

Similarly, since $F_2 \subseteq S$ and $\dim F_2 = p - t$, applying Ky Fan's principle to $A|_S$ implies
\[
\operatorname{tr}(A|_{F_2})
\;\ge\;
\sum_{i=1}^{p-t} z_i^\uparrow.
\]

%Using the orthogonal decomposition $F = F_1 \oplus F_2$, we have
%\[
%\operatorname{tr}(P_F A|_F)
%=
%\operatorname{tr}(P_{F_1} A|_{F_1})
%+
%\operatorname{tr}(P_{F_2} A|_{F_2}).
%\]
Therefore,
\[
\sum_{j=1}^{p} y_j^\uparrow
=
\operatorname{tr}(A|_F)
\;\ge\;
\sum_{i=1}^{p-t} z_i^\uparrow
+
\sum_{j=1}^t w_j^\uparrow.
\]

Finally, since the right-hand side is the sum of $p$ elements drawn from the combined sequence $(z,w)$, it is bounded below by the sum of the $p$ smallest elements of $(z,w)$:
\[
\sum_{j=1}^{p} y_j^\uparrow
\;\ge\; \sum_{i=1}^{p-t} z_i^\uparrow
+
\sum_{j=1}^t w_j^\uparrow
\;\ge\;
\sum_{i=1}^{p} (z,w)_i^\uparrow,
\]
proving \eqref{2026-04-20-ineq}.
\end{proof}

\begin{cor}
Under the assumptions of Theorem \ref{2025-11-24-thm}, the following also hold:
\begin{enumerate}[(1)]
 \item \label{2026-01-27-maj4} $(x^{\uparrow},y^{\uparrow})\ ^*\hspace{-0.1cm}\succ(z,w)^{\uparrow}$,
 \item \label{2026-01-27-maj5} $(x^{\uparrow},y^{\downarrow})\prec^*(z,w)^{\downarrow}$,
 \item \label{2026-01-27-maj6} $(x^{\uparrow},y^{\uparrow})\prec^*(z,w)^{\downarrow}$,
 \item \label{2026-01-27-maj8} $(x^{\downarrow},y^{\uparrow})\prec^*(z,w)^{\downarrow}$,
 \item \label{2026-01-27-maj7} $(x^{\uparrow},y^{\downarrow})\ ^*\hspace{-0.1cm}\succ(z,w)^{\uparrow}$.
\end{enumerate}
\end{cor}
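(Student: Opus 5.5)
The plan is to derive every item from part (3) of Theorem~\ref{2025-11-24-thm}, namely $(x^{\downarrow},y^{\downarrow})\prec^*(z,w)^{\downarrow}$. Two elementary operations do the work: rearranging within blocks, and reversing the order of the vector. I read $u\ ^*\hspace{-0.1cm}\succ v$ as meaning that every partial sum of $u$ is $\ge$ the corresponding partial sum of $v$, with equality of totals. Throughout, write $N=n+m=r+s$. By the trace identity \eqref{2026-01-28-traces}, the total sum is the same for every arrangement of $(x,y)$ and of $(z,w)$, so only the inequalities for $k<N$ need checking.

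\emph{Step 1 (a block comparison lemma).} For any $\sigma,\tau\in\{\uparrow,\downarrow\}$, each partial sum of $(x^\sigma,y^\tau)$ is at most the corresponding partial sum of $(x^\downarrow,y^\downarrow)$. To see this, split on $k$.
\begin{itemize}
\item If $k\le n$, the sum of any $k$ entries of $x$ is at most the sum of its $k$ largest.
\item If $k=n+\ell$ with $\ell\ge 1$, both sides contain the full sum of $x$. The remaining terms compare the first $\ell$ entries of $y^\tau$ with the $\ell$ largest entries of $y$, and the latter is larger.
\end{itemize}
Symmetrically, each partial sum of $(x^\sigma,y^\tau)$ is at least the corresponding partial sum of $(x^\uparrow,y^\uparrow)$. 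Combining the first of these with part (3) of the theorem by transitivity gives items \eqref{2026-01-27-maj5}, \eqref{2026-01-27-maj6} and \eqref{2026-01-27-maj8} at once, since in each of them the right-hand side is $(z,w)^\downarrow$.

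\emph{Step 2 (item \eqref{2026-01-27-maj4}, by reversal).} Set $K=N-k$. The sum of the first $k$ entries of $(x^\uparrow,y^\uparrow)$ equals the total minus the sum of its last $K$ entries. Read backwards, those last $K$ entries are the first $K$ entries of $(y^\downarrow,x^\downarrow)$. Likewise, the sum of the first $k$ entries of $(z,w)^\uparrow$ equals the total minus the sum of the first $K$ entries of $(z,w)^\downarrow$. Hence item \eqref{2026-01-27-maj4} is equivalent to $(y^\downarrow,x^\downarrow)\prec^*(z,w)^\downarrow$.

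This is exactly part (3) of Theorem~\ref{2025-11-24-thm} with the roles of $\alpha$ and $\beta$ interchanged. The swap is legitimate because $\alpha\cup\beta$ and $\alpha\cap\beta$ are symmetric in $\alpha,\beta$, so $z$ and $w$ do not change.

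\emph{Step 3 (item \eqref{2026-01-27-maj7}).} By the symmetric half of Step 1, the partial sums of $(x^\uparrow,y^\downarrow)$ dominate those of $(x^\uparrow,y^\uparrow)$. Concretely, they agree for $k\le n$, and for $k>n$ the first $\ell$ entries of $y^\downarrow$ sum to at least the first $\ell$ entries of $y^\uparrow$. Chaining this with item \eqref{2026-01-27-maj4} yields item \eqref{2026-01-27-maj7}.

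No step is a genuine obstacle. The only point needing care is the reversal bookkeeping in Step 2: confirming that the last $K$ entries of $(x^\uparrow,y^\uparrow)$ correspond precisely to the block vector $(y^\downarrow,x^\downarrow)$, with the blocks swapped, and that the $\alpha\leftrightarrow\beta$ symmetry of part (3) applies.
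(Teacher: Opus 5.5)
Your proof is correct. For items (2)--(4) it matches the paper: compare the partial sums of $(x^\sigma,y^\tau)$ blockwise with those of $(x^\downarrow,y^\downarrow)$, then chain with part (3) of the theorem. Your Step~1 just packages the paper's three case-by-case computations into one comparison lemma.

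For items (1) and (5) you take a different route.
\begin{itemize}
\item \textbf{The paper's route.} It gets (1) by applying part (3) of the theorem to $-A$. It gets (5) by rewriting it as $((-x)^\downarrow,(-y)^\uparrow)\prec^*(-z,-w)^\downarrow$ and applying item (4) to $-A$. This is the negation duality $u\prec^* v \iff -u\ {}^*\!\succ -v$, with the within-block orderings flipped.
\item \textbf{Your route.} You use reversal. Writing $S_k$ for the $k$-th partial sum, $T$ for the common total and $\mathrm{rev}$ for reversal, you use $S_k(u)=T-S_{N-k}(\mathrm{rev}\,u)$. This turns (1) into $(y^\downarrow,x^\downarrow)\prec^*(z,w)^\downarrow$, which is part (3) with $\alpha$ and $\beta$ interchanged. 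You then obtain (5) from (1) via the lower half of your comparison lemma.
\end{itemize}

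What your route buys is that it stays inside the hypotheses of the theorem. The matrix $-A$ is not positive definite, so the paper's argument tacitly relies on two things: the proof of part (3) uses only interlacing, Ky Fan's principle and the trace identity, and so extends to all Hermitian matrices; or one substitutes the shift $cI-A$. You apply the theorem exactly as stated, and you make visible that (1) is nothing more than part (3) with the roles of $\alpha$ and $\beta$ swapped. The paper's negation trick is shorter and more mechanical. It produces every reversed-order relation directly from its ``$\downarrow$'' counterpart, without needing the $\alpha\leftrightarrow\beta$ symmetry.
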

\begin{proof}
    \eqref{2026-01-27-maj4}. This follows by applying item~\eqref{2026-01-27-maj3} of Theorem~\ref{2025-11-24-thm} to $-A$.

    \eqref{2026-01-27-maj5}. Set $b:=(z,w)^\downarrow$.
For every $k=1,\dots,n$, the sum of the $k$ smallest entries of $x$ is bounded
above by the sum of the $k$ largest entries of $x$, that is,
\[
\sum_{j=1}^k x_j^{\uparrow}\le \sum_{j=1}^k x_j^{\downarrow} \le \sum_{j=1}^k b_j,
\]
where the last inequality holds since we already know  $(x^{\downarrow},y^{\downarrow})\prec^*(z,w)^{\downarrow}$.

The remaining inequalities in the definition of
$(x^{\uparrow},y^{\downarrow})\prec^*(z,w)^{\downarrow}$ are of the form
\[
\sum_{i=1}^{n} x_i^\uparrow+\sum_{j=1}^{\ell} y_j^{\downarrow}\le \sum_{i=1}^{n+\ell} b_i,
\qquad \ell=1,\dots,m.
\]
These are proved precisely as in the case
$(x^{\downarrow},y^{\downarrow})\prec^*(z,w)^{\downarrow}$, since they depend
only on the full sum $\sum_{i=1}^n x_i=\operatorname{tr}(A[\alpha])$ and not on
a particular rearrangement of the entries in $x$. The last inequality is actually an equality due to \eqref{2026-01-28-traces}.

\eqref{2026-01-27-maj6}. Set $b:=(z,w)^{\downarrow}$ and recall that
$(x^{\downarrow},y^{\downarrow})\prec^*(z,w)^{\downarrow}$,
that is, in particular,
\begin{align}
\sum_{j=1}^k x_j^{\downarrow} &\le \sum_{j=1}^k b_j, \,\,\, \mbox{ for } k=1,\dots,n, \label{eq:known1}\\
\sum_{j=1}^n x_j^\downarrow+\sum_{j=1}^{\ell} y_j^{\downarrow}
&\le \sum_{j=1}^{n+\ell} b_j,\,\,\, \mbox{ for } \ell=1,\dots,m, \label{eq:known2}
\end{align}
together with the corresponding trace condition.

For every $k=1,\dots,n$, the sum of the $k$ smallest entries of a vector is at
most the sum of its $k$ largest entries, hence
\[
\sum_{j=1}^k x_j^{\uparrow}\le \sum_{j=1}^k x_j^{\downarrow} 
\le \sum_{j=1}^k b_j,
\qquad k=1,\dots,n.
\]
Combining this inequality with (\ref{eq:known2}), we have the following, for every $\ell=1,\dots,m$,
\[
\sum_{j=1}^n x_j^\uparrow+\sum_{j=1}^{\ell} y_j^{\uparrow}
\le
\sum_{j=1}^n x_j^\uparrow+\sum_{j=1}^{\ell} y_j^{\downarrow}
\le
\sum_{j=1}^{n+\ell} b_j.
\]
The last inequality holds with equality again.

\eqref{2026-01-27-maj8}. Set $b:=(z,w)^\downarrow$ and recall 
$(x^{\downarrow},y^{\downarrow})\prec^*(z,w)^{\downarrow}.$
From it, for every
$k=1,\dots,n$,
\[
\sum_{j=1}^k x_j^{\downarrow}\le \sum_{j=1}^k b_j.
\]
Fix $\ell\in\{1,\dots,m\}$. Since the sum of the $\ell$ smallest entries of a
vector is at most the sum of its $\ell$ largest entries, we have
\[
\sum_{j=1}^{\ell} y_j^{\uparrow}\le \sum_{j=1}^{\ell} y_j^{\downarrow}.
\]
Therefore,
\[
\sum_{j=1}^{n} x_j^\downarrow + \sum_{j=1}^{\ell} y_j^{\uparrow}
\le
\sum_{j=1}^{n} x_j^\downarrow + \sum_{j=1}^{\ell} y_j^{\downarrow}
\le
\sum_{j=1}^{n+\ell} b_j,
\]
where in the last inequality, we used $(x^{\downarrow},y^{\downarrow})\prec^*(z,w)^{\downarrow}$.
The last inequality holds with equality again.

\eqref{2026-01-27-maj7}. Set $c:=(z,w)^{\uparrow}$. For any real vector $v$, we have
$(-v)^{\downarrow}=-v^{\uparrow}$ and $(-v)^{\uparrow}=-v^{\downarrow}$.
In particular, $(-z,-w)^{\downarrow}=-(z,w)^{\uparrow}=-c.$ Thus, the relation $(x^{\uparrow},y^{\downarrow})\;{}^{*}\!\succ\;(z,w)^{\uparrow}$
is equivalent to 
$$
((-x)^{\downarrow},(-y)^{\uparrow}) =(-x^{\uparrow},-y^{\downarrow})=-(x^{\uparrow},y^{\downarrow})\; \prec^* \; - (z,w)^{\uparrow} = (-z,-w)^{\downarrow}
$$
This is precisely the $*$-majorization relationship \eqref{2026-01-27-maj8}, applied to $-A$.
\end{proof}

\section*{Acknowledgments}
S.M.\ Fallat is supported in part by an NSERC Discovery Research Grant, Application No.: RGPIN-2025-05272.
 The work of PIMS Postdoctoral Fellow S.\ Mondal leading to this publication was supported in part by the Pacific Institute for the Mathematical Sciences.  H.\ Sendov is supported in part by an NSERC Discovery Research Grant, Application No.: RGPIN-2020-06425.


\begin{thebibliography}{10}

 \bibitem{Bhatia1997}
 Bhatia R, {\it Matrix analysis}, New York: Springer-Verlag; 1997.
 



% \bibitem{FJb}
% S.M. Fallat and C.R. Johnson, 
% {\it Totally Nonnegative Matrices,} 
% Princeton University Press, Princeton, USA, 2011.

\bibitem{F0}
K Fan, \textit{On a Theorem of Weyl Concerning Eigenvalues of Linear Transformations I*}, Proc. Natl. Acad. Sci. \textbf{35 (11)} (1949), 652-655.



\bibitem{Fis}
E. Fischer, \textit{\"{U}ber den Hadamard'schen  determinantensatz},
Archiv. Math. Physik (3), \textbf{13} (1908), 32--40.

\bibitem{FischerHolbrook1977}
P.~Fischer and J.~A.~R.~Holbrook,
\newblock Matrices doubly stochastic by blocks,
\newblock {\it Canadian Journal of Mathematics}, 29(3) (1977), 559--577.

\bibitem{Had}
J. Hadamard, \textit{R\'{e}soultion d'une question relative aux
d\'{e}terminants}, Bull. Sci. Math. S\'{e}r. 2, \textbf{17} (1893),
240--246.
\bibitem{Hardy1929}
G. H. Hardy, J. E. Littlewood, and G. Pólya,
\textit{Some simple inequalities satisfied by convex functions},
Messenger of Mathematics, \textbf{58} (1929), 145--152.

\bibitem{HardyLittlewoodPolya1952}
G.~H.~Hardy, J.~E.~Littlewood, and G.~P\'olya,
\newblock {\it Inequalities}, 2nd ed.,
\newblock Cambridge University Press, Cambridge, 1952.



 \bibitem{Ko1}
D.M. Koteljanskii, 
The Theory of Nonnegative and Oscillating Matrices (Russian)
{\it Ukrain. Mat. \u{Z}.} {\bf 2}:94-101 (1950); 
English transl.: {\it Translations of the AMS,} Series 2 {\bf 27} (1963), 1-8.

\bibitem{Ko2}
D.M. Koteljanskii, A Property of Sign-Symmetric Matrices (Russian),
{\it Mat. Nauk (N.S.)} {\bf 8}:163-167 (1953); 
English transl.: {\it Translations of the AMS,} Series 2 {\bf 27} (1963), 19-24.

\bibitem{Marshall2011}
A. W. Marshall, I. Olkin, and B. C. Arnold,
\textit{Inequalities: Theory of Majorization and Its Applications},
2nd Edition, Springer-Verlag, 2011.

\bibitem{Niculescu2006}
C. P. Niculescu and L.-E. Persson,
\textit{Convex Functions and Their Applications: A Contemporary Approach},
CMS Books in Mathematics, Vol. 23, Springer-Verlag, New York, 2006.

\bibitem{Nielsen2000}
M. A. Nielsen and I. L. Chuang,
\textit{Quantum Computation and Quantum Information},
Cambridge University Press, 2000.
\textbf{30} (1958), 175--210.




\bibitem{Phelps2001}
R. R. Phelps,
\textit{Lectures on Choquet’s Theorem},
2nd Edition, Lecture Notes in Mathematics, No. 1757,
Springer-Verlag, Berlin, 2001.

\bibitem{Rado1952}
R.~Rado,
\newblock An inequality,
\newblock {\it J. London Math. Soc.} 27 (1952), 1--6.


\bibitem{Ryff1965}
J. V. Ryff,  {Orbits of $L^1$-functions under doubly stochastic transformations}, 
{\it Trans. Amer. Math. Soc.}, \textbf{117} (1965), 92--100.

\bibitem{SendovYuan2026}
H.~Sendov and M.~Yuan, Aggregate Bounds on the Eigenvalues of the Principal Submatrices of a Hermitian Matrix and Majorization Relations,
\href{https://arxiv.org/abs/2601.16320}{arXiv:2601.16320}, 2026.




 \end{thebibliography}
\end{document}